\newcommand{\N}{\mathsf{N}}
\newcommand{\T}{\mathsf{T}}
\newcommand{\E}{\mathsf{E}}
\newcommand{\Q}{\mathsf{Q}}
\newcommand{\LN}{\mathsf{LN}}
\newcommand{\LA}{\mathsf{LA}}
\newcommand{\LX}{\mathsf{LX}}
\newcommand{\A}{\mathsf{A}}
\newcommand{\B}{\mathsf{B}}
\newcommand{\G}{\mathsf{G}}
\newcommand{\LT}{\mathsf{LT}}
\newcommand{\LB}{\mathsf{LB}}
\newcommand{\LQ}{\mathsf{LQ}}
\newcommand{\X}{\mathsf{X}}
\newcommand{\M}{\mathsf{M}}
\renewcommand{\L}{\mathsf{L}}
\renewcommand{\P}{\mathsf{P}}
\newcommand{\LE}{\mathsf{LE}}
\renewcommand{\ker}{\mathsf{ker}}
\newcommand{\PAN}{\mathsf{P_AN}}
\newcommand{\PAQ}{\mathsf{P_AQ}}
\newcommand{\PAT}{\mathsf{P_AT}}
\newcommand{\K}{\mathsf{K}}
\newcommand{\R}{\mathsf{R}}
\newtheorem{theorem}{Theorem}[section]
\newtheorem{lemma}[theorem]{Lemma}
\newtheorem{prop}[theorem]{Proposition}
\newtheorem{corollary}[theorem]{Corollary}
\theoremstyle{definition}
\newtheorem{definition}[theorem]{Definition}
\newtheorem{example}[theorem]{Example}
\newtheorem{remark}[theorem]{Remark}
\def\red{\color{black}}
\def\blue{\color{black}}
\def\rouge{\color{red}}
\title{Non-existence of fiberwise localization for crossed modules}
\author{Olivia Monjon}
\author{J\'er\^ome Scherer}
\address{Mathematics, Ecole Polytechnique F\'ed\'erale de Lausanne, EPFL, Switzerland}
\email{olivia.monjon@epfl.ch}
\address{Mathematics, Ecole Polytechnique F\'ed\'erale de Lausanne, EPFL, Switzerland}
\email{jerome.scherer@epfl.ch}
\author{Florence Sterck}
\address{Institut de Recherche en Mathématique et Physique, Universit\'e catholique de Louvain, Belgium and Département de Mathématique, Université Libre de Bruxelles, Belgium}
\email{ florence.sterck@uclouvain.be}
\subjclass[2010]{18G45, 55P60, 55R65, 18E13. }
\keywords{Crossed modules, Localization functors, Fiberwise localizations, Acyclicity.} 
\begin{document}

\begin{abstract} We prove that localization functors of crossed modules of groups do not always admit
fiberwise (or relative) versions. To do so we characterize the existence of a fiberwise localization
by a certain normality condition and compute explicit examples and counter-examples. In fact, some
nullification functors do not behave well and we also prove that the fiber
of certain nullification functors, known as acyclization functors in other settings such
as groups or spaces, is not acyclic.
\end{abstract}

\maketitle

\section*{Introduction} 
As soon as localization functors have been introduced in homotopy theory and algebra, fiberwise techniques have been developed in order to reduce certain
questions about extensions to easier ones. In homotopy theory for example May, \cite{MR554323}, highlighted the fundamental role of fiberwise localization in Sullivan's influential article
\cite{MR442930} on the Adams conjecture. Let us also mention the use of fiberwise plus constructions in algebraic $K$-theory, which explains how Quillen's
plus construction is related to the lower $K$-theory groups, see Berrick's book \cite{MR649409}, or Arlettaz' survey \cite[Section~3]{MR1775748}.
Fiberwise localization also plays a prominent part in Farjoun's \cite[Chapter~I]{MR1392221}, and the conjunction between general fiberwise techniques and
$K$-theoretical motivations led then Berrick and Farjoun to their work \cite{MR1997044}. 
Finally, let us mention that in the modern approach to homotopy theory by $\infty$-categorical methods
fiberwise localization appears in the recent work of Gepner and Kock \cite{MR3641669}, in the form of factorization \emph{systems} in relation to the univalence axiom.

In group theory, Casacuberta and Descheemaeker noticed in \cite{Casacuberta} that localization functors 
admit a relative version, where
group extensions replace fibration sequences. One way to obtain such a construction is to adapt Hilton's construction from \cite{MR732481}. Original
applications were related to algebraic $K$-theory again, via the plus-construction, while more recent computations include purely group theoretical work 
by Flores and the second author,  \cite{MR3881250}, and the study of conditional flatness, see \cite{FS}. In the latter, fiberwise localization was a 
key tool to understand the difference between homotopical localization for spaces and group theoretical localization.

However, many arguments one can perform for groups make sense in any semi-abelian category in the sense of Janelidze, M\'arki, and Tholen, 
\cite{JMT}. They provided axioms that capture the properties not only of the categories of groups, but also non-unital rings, crossed modules, Lie algebras, 
cocommutative Hopf algebras over a field \cite{GSV}, etc. Roughly speaking, semi-abelian categories are to groups what abelian categories are to abelian 
groups. In joint work with Gran, \cite{GranScherer}, the
second author studied thus the behavior of localization functors in an arbitrary semi-abelian category with respect to extensions, with a specific focus on the
preservation of certain properties under pullbacks. Most statements depend on the existence of fiberwise localization functors and since they always
exist in the category of groups, this project grew out of the desire to understand what happens in the next most obvious category of interest to both algebraists and
homotopy theorists, namely crossed modules of groups.

Crossed modules have been studied by Whitehead in \cite{whitehead}. They serve as a combinatorial model for connected $2$-types,
i.e. spaces with vanishing homotopy groups in degrees $\geq 3$ and have been used extensively in homotopy theory, see for example
work of Brown and Higgins, \cite{BH}. Crossed modules also enjoy nice categorical properties and have been studied from the algebraic
viewpoint. Our starting point is Norrie's \cite{Norrie}, where she establishes most of the constructions we need for crossed modules
of groups. Janelidze extended this widely and defined the notion of internal crossed modules in any semi-abelian category \cite{Jan}. 
He generalized the result of Brown and Spencer \cite{BS} and proved that the category of internal crossed modules in a semi-abelian category 
$\mathcal{C}$ is equivalent to the category of internal groupoids in $\mathcal{C}$, which forms again a semi-abelian category \cite{BG}.

When working with extensions of crossed modules and localization functors, one wishes to have a fiberwise or a relative
version at hand. It means that if $\sf 1 \rightarrow N \rightarrow T \rightarrow Q \rightarrow 1$
is a short exact sequence of crossed modules, we are looking for a natural transformation to a new sequence
$\sf 1 \rightarrow LN \rightarrow \E \rightarrow Q \rightarrow 1$ where the morphism $\sf N \rightarrow LN$ is the
localization coaugmentation $\ell^\N$ and $\sf T \rightarrow \E$ is inverted by $\sf L$. In the case of groups \cite{Casacuberta}, Casacuberta and 
Descheemaeker gave an explicit description of such a construction for any short exact sequence of groups by using the notions of actions and 
semi-direct products. With similar tools introduced by Norrie \cite{Norrie}, we thought it would be possible to adapt the construction for groups to 
the case of crossed modules. Surprisingly, this approach does not work even when we restrict our setting to the case of localization functors $\sf L$
for which  the coaugmentation $\sf T \rightarrow LT$ is a regular epimorphism for all crossed modules~$\sf T$. It turns out that fiberwise localization
unexpectedly fails to exist in general.

\medskip

\noindent
{\bf Theorem~\ref{thmfiberwiselocalization}.}
\emph{
Let $\sf L \colon \sf XMod \to XMod$ be a regular-epi localization functor. Let us consider the following exact sequence of crossed modules.
\begin{equation*}
\begin{tikzpicture}[descr/.style={fill=white},baseline=(A.base),scale=0.8] 
\node (A) at (0,0) {$\T$};
\node (B) at (2.5,0) {$\Q$};
\node (C) at (-2.5,0) {$\N$};
\node (O1) at (-4.5,0) {$1$};
\node (O2) at (4.5,0) {$1$};
\path[-stealth,font=\scriptsize]
(C.east) edge node[above] {$\kappa$} (A.west)
;
\path[-stealth,font=\scriptsize]
(O1) edge node[above] {$ $} (C)
(B) edge node[above] {$ $} (O2)
(A) edge node[above] {$\alpha$} (B);
\end{tikzpicture} 
 \end{equation*}
 This exact sequence admits a fiberwise localization if and only if $\kappa(\ker(\ell^{\N}))$ is a normal subcrossed module of~$\T$.
}

\medskip

This helps us to understand that even harmless looking nullification functors such as $\sf P_{\sf X \mathbb Z}$, the functor
that kills all copies of the crossed module $0 \rightarrow \mathbb Z$ concentrated in one degree, do not satisfy this
normality condition, as we prove in Theorem~\ref{nonexistencelocfibr}. To our knowledge this is the first example of this
kind. We show finally in Proposition~\ref{nullificationnonacyclic} that the fiber
of the same nullification functor, known as acyclization functors in other settings such
as groups or spaces, is not acyclic in general. The two phenomena were known to be related (fiberwise nullification
and acyclization), see \cite{GranScherer}, but again, this is the first concrete example we know of. 

\medskip

\noindent
 {\bf Acknowledgements.} We would like to thank Marino Gran for sharing his insight on the phenomena studied here,
Mariam Pirashvili for fruitful conversations in Strasbourg, and the referee for their thorough reading and helpful suggestions. The third author’s research
is supported by a FRIA doctoral grant no. 27485 of the Communauté française
de Belgique. This work started when the third author visited the first and second authors at the \'Ecole Polytechnique F\'ed\'erale de Lausanne. This visit was possible thanks to a Gustave Boël–Sofina Fellowship.

\section{The semi-abelian category of crossed modules}
In this first section, we provide the basic definitions, notation, and constructions 
we use in the category of crossed modules. We describe in particular pushouts and cokernels
for crossed modules. We follow Norrie \cite{Norrie} and Brown-Higgins \cite{BH}.

\begin{definition}\label{xmod grp}\cite{whitehead}
A \emph{crossed module of groups} is given by a morphism of groups $\partial^\X \colon X_1 \to X_2$ endowed with an action by automorphisms of groups of $X_2$ on $X_1$, 
denoted by $X_2 \times X_1 \to X_1 \colon (b,x) \mapsto \;^{b}x$, such that for any $b$ in $X_2$ and any $x$, $y$ in $X_1$,
\begin{equation}
\partial^\X(\;^b x) = b\partial^\X(x)b^{-1},
\end{equation}
\begin{equation}\label{peiffer}
^{\partial^\X(x)}y = xyx^{-1}.
\end{equation}
\end{definition}

Hence a crossed module is a triple $(X_1,X_2,\partial^\X)$ and we will sometimes refer to $\partial^X$
as the \emph{connecting morphism}. For the sake of readability, we will also use the notation $\X$ for such a crossed module. We give several examples of this notion.

\begin{example}\label{examplexmod}
\begin{enumerate}
    \item The inclusion of a normal subgroup $N$ of $M$ is a crossed module where the action of $M$ on $N$ is given by the conjugation $^m n = mnm^{-1}$. As a particular example, the identity morphism $G = G$ 
provides a way to construct a crossed module from a single
group.


\item For any group $G$, the inclusion of the trivial group 
$1 \rightarrowtail G$ endowed with the trivial action is a 
crossed module.


\item Let $g : A \rightarrow B$  be a surjective morphism of groups such that its kernel is included in the center of $A$, i.e.\ 
$ker(g) \subseteq Z(A)= \{ x \in A \mid xa = ax, \; \forall a \in A\}$.
There exists an action of $B$ on $A$ via $ B \times A \to A \colon (b,a) \mapsto xax^{-1}$, where $g(x) =b$. This action is well defined since $ker(g) \subseteq Z(A)$.
One can check that this gives to the morphism $g$ the structure of a crossed module.
\end{enumerate}
\end{example}

\begin{definition} \label{morphxmod}
Let $\N := (N_1,N_2,\partial^{\N})$ and $\M := (M_1,M_2,\partial^{\M})$ be two crossed modules, a \emph{morphism of crossed modules} is given by a pair of group homomorphisms $\alpha := (\alpha_1, \alpha_2) : (N_1 \rightarrow M_1 , N_2 \rightarrow M_2)$ such that the two following diagrams commute
\[
\begin{tikzpicture}[descr/.style={fill=white},scale=0.8,baseline=(A.base)]
\node (A) at (0,0) {$N_2$};
\node (B) at (0,2) {$N_1$};
\node (C) at (2,2) {$M_1$};
\node (D) at (2,0) {$M_2$};
  \path[-stealth,font=\scriptsize]
 (B.south) edge node[left] {$\partial^{\N}$} (A.north) 
 (C.south) edge node[right] {$\partial^{\M}$} (D.north) 
  (B.east) edge node[above] {$\alpha_1$} (C.west) 
 (A.east) edge node[below] {$\alpha_2$} (D.west);
\end{tikzpicture}\hspace{1cm} \begin{tikzpicture}[descr/.style={fill=white},yscale=0.8,baseline=(A.base)]
\node (A) at (0,0) {$M_2 \times M_1$};
\node (B) at (0,2) {$N_2 \times N_1$};
\node (C) at (2,2) {$N_1$};
\node (D) at (2,0) {$M_1.$};
  \path[-stealth,font=\scriptsize]
 (B.south) edge node[left] {$(\alpha_2, \alpha_1)$} (A.north) 
 (C.south) edge node[right] {$\alpha_1$} (D.north) 
  (B.east) edge node[above] {$ $} (C.west) 
 (A.east) edge node[below] {$ $} (D.west);
\end{tikzpicture}
\]
where the horizontal arrows in the right diagram are the respective group actions of the two crossed modules.
\end{definition}

The two definitions above give rise to the category
$\mathsf{XMod}$ of crossed modules of groups. 
We remark that there is an embedding of the category of groups in this category via two functors {\blue which have been introduced on the objects in \cref{examplexmod}. These functors} are respectively left and right adjoint to the truncation functor $Tr \colon \sf XMod \to Grp$ that sends a crossed module $\T := (T_1,T_2,\partial^\T)$
to $T_2$. This will help us to import group theoretical results into $\sf XMod$. 

\begin{lemma}\label{lemma_lefttruncation}
The functor $\sf X\colon Grp \to XMod$ which sends a group $G$ to the crossed module ${\sf X} G = ( 1,G, 1)$ reduced to the group $G$ at level~$2$ 
is left adjoint to the truncation functor $Tr$. The functor ${ \sf R \colon  Grp \to XMod } \colon G \mapsto (G,G,Id_G) $ is right adjoint to the truncation functor $ Tr$.
\end{lemma}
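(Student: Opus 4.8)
The plan is to establish both adjunctions by producing natural bijections on Hom-sets; the associated units and counits will then visibly be identities, so no separate verification of triangle identities is needed. Concretely, for a group $G$ and a crossed module $\T = (T_1, T_2, \partial^\T)$, recalling that $Tr\,\T = T_2$, I would exhibit natural isomorphisms
\[
\mathrm{Hom}_{\mathsf{XMod}}(\X G, \T) \;\cong\; \mathrm{Hom}_{\mathsf{Grp}}(G, T_2)
\qquad\text{and}\qquad
\mathrm{Hom}_{\mathsf{XMod}}(\T, \R G) \;\cong\; \mathrm{Hom}_{\mathsf{Grp}}(T_2, G).
\]

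For the first one, a morphism $\X G = (1, G, 1) \to \T$ is a pair $(\alpha_1, \alpha_2)$ in which $\alpha_1 \colon 1 \to T_1$ is necessarily trivial and $\alpha_2 \colon G \to T_2$ is an arbitrary group homomorphism; both squares in \cref{morphxmod} commute automatically because the level-$1$ group of $\X G$ vanishes. Hence $(\alpha_1, \alpha_2) \mapsto \alpha_2$ is the desired bijection, it is plainly natural in $G$ and in $\T$, and the unit $G \to Tr(\X G) = G$ is the identity.

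For the second one, a morphism $\T \to \R G = (G, G, \mathrm{Id}_G)$ is a pair $(\beta_1, \beta_2) \colon (T_1 \to G,\, T_2 \to G)$, and commutativity of the square of connecting morphisms forces $\beta_1 = \beta_2 \circ \partial^\T$, so such a morphism is determined by $\beta_2$. Conversely, I would check that $(\beta_2 \circ \partial^\T,\, \beta_2)$ is a morphism of crossed modules for every homomorphism $\beta_2 \colon T_2 \to G$, which gives the bijection $(\beta_1, \beta_2) \mapsto \beta_2$, natural again, with counit $Tr(\R G) = G \to G$ the identity. This last check is the only place where the axioms enter, and it is the one step I expect to require care: the action of $\R G$ on itself is conjugation --- forced by the Peiffer identity \eqref{peiffer} since the connecting morphism of $\R G$ is $\mathrm{Id}_G$ --- so compatibility of $(\beta_2 \circ \partial^\T, \beta_2)$ with the two actions amounts, using the first crossed module identity $\partial^\T(\,{}^{b}x) = b\,\partial^\T(x)\,b^{-1}$ for $\T$, to the equality $\beta_2\bigl(b\,\partial^\T(x)\,b^{-1}\bigr) = \beta_2(b)\,\beta_2(\partial^\T(x))\,\beta_2(b)^{-1}$, which holds because $\beta_2$ is a group homomorphism.
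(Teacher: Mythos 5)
Your proof is correct and follows the same route as the paper, which simply records the two natural isomorphisms $\mathrm{Hom}_{\mathsf{XMod}}(\X G,\T)\cong \mathrm{Hom}_{\mathsf{Grp}}(G,T_2)$ and $\mathrm{Hom}_{\mathsf{XMod}}(\T,\R G)\cong \mathrm{Hom}_{\mathsf{Grp}}(T_2,G)$; you merely supply the routine verifications (triviality of level $1$ for $\X G$, the forced equality $\beta_1=\beta_2\circ\partial^\T$, and compatibility with the conjugation action on $\R G$) that the paper leaves implicit.
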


\begin{proof}
We have natural isomorphisms ${ Hom_{\sf XMod}}( {\sf X}G, \T) \cong { Hom_{{\sf Grp }}}(G, T_2)$ for any crossed module~$\sf T$
and likewise ${ Hom_{\sf XMod}}( {\sf T}, {\sf R}G) \cong { Hom_{{\sf Grp }}}(T_2, G)$.
\end{proof}

There is an obvious notion of subcrossed module, see \cite{Norrie}. One simply requires the subobject to be made levelwise of subgroups, the connecting
homomorphism and the action are induced by the given connecting homomorphism and action. In other words, we have the following definition.

\begin{definition}\label{defsubxmod}
Let $(i_1,i_2) \colon \N \to \T$ be a morphism of crossed modules, if $i_1$ and $i_2$ are group inclusions we say that $\N$ is a \emph{subcrossed module} of $\T$. 
\end{definition}

For $\N$ a subcrossed module of $\T = (T_1,T_2,\partial^\T)$,
we introduce a subgroup of $T_1$ denoted as follows \[ [N_2,T_1] := \langle \;^{n_2}t_1t_1^{-1} \mid  t_1  \in T_1, n_2 \in N_2 \rangle . \] 
Let us notice that the commutator subgroup of $T_1$, $[T_1,T_1] = \langle t_1t_1't_1^{-1}t_1'^{-1} \mid t_1, t_1' \in T_1\rangle$, is included in 
$[T_2,T_1]$. {\blue Indeed, we have the following inclusions via condition \eqref{peiffer} of crossed modules: \[ [T_1,T_1] = [\partial(T_1),T_1] \subseteq [T_2,T_1].\] }
\newline

What is less obvious maybe and the source of interesting phenomena in $\sf XMod$ that one cannot see within the category of groups
is the notion of normality and thus of quotient or cokernel.
  

\begin{definition}\label{defnormalcrossed}
A subcrossed module $\N := (N_1,N_2,\partial^\N)$ of $\T := (T_1,T_2,\partial^\T)$ is a \emph{normal subcrossed module} if the following conditions hold
\begin{enumerate}
    \item $N_2$ is a normal subgroup of $T_2$;
    \item For any $t_2 \in T_2$ and $n_1 \in N_1$; $^{t_2}n_1 \in N_1$.
    \item $[N_2,T_1] \subseteq N_1$
\end{enumerate}
\end{definition}

We recall from \cite{BH,LG} the construction of the quotient. It illustrates the fact that colimits are not straightforward to construct in the category 
of crossed modules, but in the case of cokernels we have an explicit formula that will be very useful in concrete computations.
Let $G \times H \to H$  be an action of groups and $S$ a subgroup of $H$, we denote by $S_G$ the closure of $S$ via the action of $G$:
\[ 
S_G := \langle \;^gs \mid g \in G, s \in S \rangle . 
\]

\begin{definition}\label{def epi reg}
Let $f\colon \sf H \rightarrow \sf T$ be a morphism of crossed modules. The \emph{cokernel} of $f$ is the crossed module
${\sf coker} f$ given by the following morphism of crossed modules
\[
\begin{tikzpicture}[descr/.style={fill=white},yscale=0.8]
\node (A) at (0,0) {$T_2$};
\node (B) at (0,2) {$T_1$};
\node (C) at (4,2) {${T_1}/\big({f_1(H_1)_{T_2}[f_2(H_2)_{T_2},T_1]}\big)$};
\node (D) at (4,0) {$coker(f_2)$};
  \path[-stealth,font=\scriptsize]
 (B.south) edge node[left] {$\partial^{\T}$} (A.north) 
 (C.south) edge node[right] {$\tilde{\partial}^\T$} (D.north) 
  (B.east) edge node[above] {$ $} (C.west) 
 (A.east) edge node[below] {$ $} (D.west);
\end{tikzpicture}\]
where $\tilde{\partial}^\T$ is induced by the universal property of the cokernel of groups.
\end{definition}

\begin{remark}
Note that when ${\sf H}$ is a normal subcrossed module of $\sf T$ the above definition of cokernel 
coincides simply with the levelwise quotient by the normal subgroups $H_1 \triangleleft T_1$ and $H_2 \triangleleft T_2$.
\end{remark}

The kernel of a morphism of crossed modules is defined ``component-wise" as in the category of groups. More precisely, let $(\alpha_1, \alpha_2) \colon \A \to \B $ be a morphism of crossed modules. The kernel of this morphism denoted by $\ker(\alpha_1,\alpha_2)$ is given by \[ (ker(\alpha_1), ker(\alpha_2), \hat{\partial}^\A),\]
where $\hat{\partial}^\A$ is induced by the universal property of $ker(\alpha_2)$.

\begin{remark}
\label{rem:mononormal}
We recall that in $\sf XMod$ it is equivalent to being a normal subcrossed module or the kernel of some morphism (a normal monomorphism) \cite{Norrie}.
\end{remark}

More generally, all limits are computed ``component-wise'' as in the category of groups.
For example, pullbacks in $\sf XMod$ are built as follows \cite{LG}. Let $f \colon \T \to \Q$ and $g \colon \Q' \to \Q$ be two morphisms of crossed modules. Then the pullback of $f$ along $g$ is given by the following square
\[
\begin{tikzpicture}[descr/.style={fill=white},yscale=0.8]
\node (A) at (0,0) {$\T$};
\node (B) at (0,2) {$\T'$};
\node (C) at (2,2) {$\Q'$};
\node (D) at (2,0) {$\Q$};
  \path[-stealth,font=\scriptsize]
 (B.south) edge node[left] {$\pi_\T$} (A.north) 
 (C.south) edge node[right] {$g$} (D.north) 
  (B.east) edge node[above] {$\pi_{\Q'}$} (C.west) 
 (A.east) edge node[below] {$f$} (D.west);
\end{tikzpicture}
\]
The object part $\T'$ of the pullback is built component-wise as in the case of groups 
\[ 
(T_1\times_{Q_1}Q'_1, T_2\times_{Q_2}Q'_2, \partial'),
\]
where $\partial'$ and the action are induced by the universal property of the pullbacks in $\sf Grp$. The projections are the natural ones, given also component-wise.\newline

In contrast to the limits, which are built component-wise, colimits are not. In particular, the construction of cokernels is not as straightforward as the case of groups, as we saw 
in \cref{def epi reg}. We refer to \cite[Proposition~11]{BH} for the description of pushouts in $\sf XMod$. Note that thanks to the adjunction of \cref{lemma_lefttruncation}, the ``second level'' of the pushout of crossed modules is always constructed as in $\sf Grp$.

The category of crossed modules is semi-abelian, as shown in \cite{JMT}. This notion has been introduced by Janelidze, M\'arki, and Tholen in \cite{JMT}.

 Semi-abelian categories enjoy many nice properties we will use in the following 
 sections, such as the traditional homological lemmas, \cite{BB}, the Split Short Five Lemma, \cite{Bourn}, the Noether Isomorphism Theorems, \cite{BB}, 
 and that one can recognize pullbacks by looking at kernels or cokernels, \cite[Lemmas 4.2.4 and 4.2.5]{BB}. For the sake of completeness, we recall Lemma 4.2.4 in \cite{BB}, which will be useful several times in this article.
 
 \begin{prop}[Lemma 4.2.4 \cite{BB}]\label{usefulpropopointed}
Let $\mathcal{C}$ be a pointed category. We consider the following diagram where $\kappa$ is the kernel of $\alpha$
\[
\begin{tikzpicture}[descr/.style={fill=white},baseline=(A.base),scale=0.8] 
\node (A) at (0,0) {$T'$};
\node (B) at (2.5,0) {$Q'$};
\node (C) at (-2.5,0) {$N'$};
\node (A') at (0,-2) {$T$};
\node (B') at (2.5,-2) {$Q$};
\node (C') at (-2.5,-2) {$N$};
\node (X) at (-1.25,-1) {$(1)$};
\node (Y) at (1.25,-1) {$(2)$};
\path[-stealth,font=\scriptsize]
(B.south) edge node[right] {$ w$}  (B'.north)
 (C.south) edge node[left] {$ u $}  (C'.north)
(A.south) edge node[left] {$ v$} (A'.north)
(C'.east) edge node[above] {$\kappa$} (A'.west)
(A') edge node[above] {$\alpha$} (B') ([yshift=2pt]A'.east)
(C.east) edge node[above] {$\kappa'$} (A.west)
(A) edge node[above] {$\alpha'$} (B);
\end{tikzpicture} 
\]
\begin{enumerate}
    \item If $w$ is a monomorphism then $\kappa' =ker(\alpha')$ if and only if (1) is a pullback;
    \item When $(2)$ is a pullback and $\alpha' \circ \kappa'$ is the zero morphism, $\kappa'$ is the kernel of $\alpha'$ if and only if $u$ is an isomorphism.
\end{enumerate}

\end{prop}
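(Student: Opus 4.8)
The plan is to reduce both parts to the standard characterization of a kernel in a pointed category as a pullback, and then to apply the composition–cancellation (``pasting'') lemma for pullbacks. Concretely, saying $\kappa = \ker(\alpha)$ is the same as saying that the square with corners $N, 0, T, Q$ (top edge $N \to 0$, left edge $\kappa$, right edge the zero map $0 \to Q$, bottom edge $\alpha$) is a pullback, and likewise $\kappa' = \ker(\alpha')$ corresponds to the analogous ``kernel-test'' square for $\alpha'$. Throughout I will use that a kernel is a monomorphism, so that $\kappa$ (and, once established, $\kappa'$) may be cancelled on the left, together with the commutativity relations $v\kappa' = \kappa u$ and $\alpha v = w\alpha'$ read off from squares $(1)$ and $(2)$.

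For Part 1, assume first that square $(1)$ is a pullback. I would begin by checking $\alpha'\kappa' = 0$: since $w\alpha'\kappa' = \alpha v\kappa' = \alpha\kappa u = 0$ and $w$ is a monomorphism, cancelling $w$ gives $\alpha'\kappa' = 0$. To verify the universal property of the kernel, given $f\colon Z \to T'$ with $\alpha' f = 0$, the map $vf$ satisfies $\alpha(vf) = w\alpha' f = 0$, hence factors uniquely through $\kappa = \ker(\alpha)$ as $vf = \kappa h$; the pair $(h,f)$ is then a cone over the pullback $(1)$, yielding a unique $g\colon Z \to N'$ with $\kappa' g = f$, uniqueness coming from monicity of $\kappa$. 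Conversely, if $\kappa' = \ker(\alpha')$, I would check the pullback universal property of $(1)$ directly: a cone $(p\colon Z \to N,\ q\colon Z \to T')$ with $\kappa p = vq$ satisfies $w\alpha' q = \alpha vq = \alpha\kappa p = 0$, so $\alpha' q = 0$ by $w$ monic, whence $q$ factors through $\kappa' = \ker(\alpha')$; monicity of $\kappa$ then identifies the induced map into $N'$ as the required factorization. In both directions the hypothesis that $w$ is a monomorphism is used precisely to pass between ``$\alpha'(-)=0$'' and ``$\alpha(v\,-)=0$''.

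For Part 2, I would phrase the two kernel conditions as pullbacks along zero maps and exploit that square $(2)$ is itself a pullback. Pasting the kernel-test square for $\alpha'$ (corners $N', 0, T', Q'$) on top of square $(2)$ produces the outer rectangle with corners $N', 0, T, Q$, whose left leg is $v\kappa' = \kappa u$ and whose right leg is the zero map $0 \to Q$. Since $(2)$ is a pullback, the pasting lemma says that this outer rectangle is a pullback if and only if the kernel-test square for $\alpha'$ is, that is, if and only if $\kappa' = \ker(\alpha')$. On the other hand, $\kappa = \ker(\alpha)$ says exactly that the rectangle with left leg $\kappa$ and the same zero right leg is a pullback. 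Comparing these two pullbacks of the common cospan consisting of $\alpha$ and the zero map $0 \to Q$: if $u$ is an isomorphism then $\kappa u$ is, up to that isomorphism, the kernel of $\alpha$, so the outer rectangle is a pullback and hence $\kappa' = \ker(\alpha')$; conversely, if $\kappa' = \ker(\alpha')$ then the outer rectangle is a pullback, the universal property of the $\ker(\alpha)$-pullback furnishes a comparison isomorphism $\phi\colon N' \to N$ with $\kappa\phi = \kappa u$, and cancelling the monomorphism $\kappa$ gives $\phi = u$, so $u$ is an isomorphism.

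The argument is a purely formal diagram chase valid in any pointed category, so I do not expect a genuine obstacle; the only point requiring care is the bookkeeping in the pasting lemma — keeping straight which of the two stacked squares is the ``inner'' one assumed to be a pullback, and tracking the zero maps along the right-hand column — together with invoking monicity of $\kappa$ and of $w$ at exactly the right moments. Since the statement is literally Lemma~4.2.4 of \cite{BB}, one could alternatively simply cite it, but the self-contained argument above is short and uses nothing beyond the pullback pasting lemma and the description of kernels as pullbacks along the zero object.
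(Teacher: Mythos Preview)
Your proof is correct. The paper does not supply its own proof of this statement: it records it verbatim as Lemma~4.2.4 of \cite{BB} and uses it as a black box, so there is nothing to compare your argument against except the original source. Your reduction of both parts to the description of a kernel as a pullback along the zero map, combined with the pullback pasting lemma, is exactly the standard way to establish this lemma and matches in spirit what one finds in \cite{BB}. One tiny wording issue: in the forward direction of Part~1 you attribute uniqueness of the factorization through $\kappa'$ to ``monicity of $\kappa$''; more precisely it follows from monicity of $\kappa'$, which in turn is inherited from $\kappa$ because $(1)$ is a pullback. This does not affect the validity of the argument.
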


\begin{remark}
\label{rem:eprireg}
The relevant categorical notion of epimorphism in this context is that 
of regular epimorphism (a coequalizer of a pair of parallel arrows).
In the category of crossed modules, a morphism $f=(f_1,f_2)$ is a regular epimorphism if and only if it is surjective on each component, i.e. $f_1$ and $f_2$ are surjective group homomorphisms \cite[Proposition~2.2]{LLR}. Moreover, we note that each surjective morphism is an epimorphism but there exist epimorphisms that are not surjective . Since $\sf XMod$ is a pointed protomodular category, regular epimorphisms and normal epimorphisms (the cokernel of some morphism) coincide.
\end{remark}
\section{Localization functors}
We recall the definition of localization functors, which we describe for crossed modules. 

\begin{definition}\label{localization}
A \emph{localization} functor in the category of crossed modules is a coaugmented
idempotent functor $\sf L \colon XMod \to XMod$. The coaugmentation is a natural
transformation $\ell \colon {\sf Id} \rightarrow \L$. Both $\ell^{\LX}$ and $ \L \ell^\X$ are
isomorphisms and in particular we have $\ell^{\LX} = \L \ell^\X$, see \cite[Proposition~1.1]{MR1796125}.
\end{definition}

\begin{definition}\label{local}
Let $\sf L$ be a localization functor. A crossed module $\sf T$ is $\sf L$-\emph{local}
if the coaugmentation morphism $\ell^{\sf T}\colon \sf T \rightarrow \sf L \sf T$ is an
isomorphism. A morphism $f\colon \sf N \rightarrow \sf M$ is an $\sf L$-\emph{equivalence} 
if ${\sf L} f$ is an isomorphism.
\end{definition}

Here are a few basic and useful properties of $\L$-equivalences.

\begin{lemma}\label{lemma_colimitandequivalences}
\begin{enumerate}
    \item The pushout of an $\sf L$-equivalence is an $\L$-equivalence.
     \item The composition of $\L$-equivalences is an $\L$-equivalence.
    \item A $\kappa$-filtered colimit of a diagram $\sf T_\beta$ of $\sf L$-equivalences $\sf T_\beta \rightarrow  T_{\beta +1}$ for all successor ordinals 
    $\beta +1 < \kappa$ yields an $\sf L$-equivalence $\sf T_0 \rightarrow T_\kappa = \sf colim_{\beta < \kappa} T_\beta$.
    \item Let ${\sf F}$ be an $I$-indexed diagram of $\sf L$-equivalences in the category of morphisms of crossed modules. 
    Then the colimit $\sf colim_I F$ is an $\L$-equivalence.
\end{enumerate}
\end{lemma}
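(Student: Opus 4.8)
The plan is to reduce all four items to elementary facts about (co)limits of sets by first reformulating $\L$-equivalences via orthogonality. I would record the following standard consequence of idempotency (compare \cite[Proposition~1.1]{MR1796125} for the basic properties used): for every crossed module $\X$ the coaugmentation $\ell^\X \colon \X \to \LX$ is an $\L$-equivalence and $\LX$ is $\L$-local, and for every $\L$-local crossed module $\mathsf{Z}$ the precomposition map $\mathrm{Hom}_{\sf XMod}(\LX, \mathsf{Z}) \to \mathrm{Hom}_{\sf XMod}(\X, \mathsf{Z})$ induced by $\ell^\X$ is a bijection. Combining these two facts yields the characterization I will use throughout: a morphism $f \colon \N \to \M$ is an $\L$-equivalence \emph{if and only if} $\mathrm{Hom}_{\sf XMod}(f, \mathsf{Z})$ is a bijection for every $\L$-local $\mathsf{Z}$. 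I would also note at the outset that all the colimits occurring in the statement exist in $\sf XMod$.

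With this in hand, item~(2) is immediate, since $\mathrm{Hom}_{\sf XMod}(g \circ f, \mathsf{Z})$ is the composite of the two bijections $\mathrm{Hom}_{\sf XMod}(g, \mathsf{Z})$ and $\mathrm{Hom}_{\sf XMod}(f, \mathsf{Z})$ (equivalently, $\L(g \circ f) = \L(g)\,\L(f)$ is a composite of isomorphisms). For item~(4) I would use that colimits in the category of morphisms of crossed modules are computed pointwise, so that if $\mathsf{F}$ is the diagram $i \mapsto (f_i \colon \A_i \to \B_i)$, then $\mathrm{colim}_I \mathsf{F}$ is the induced arrow $\mathrm{colim}_I \A_i \to \mathrm{colim}_I \B_i$; applying $\mathrm{Hom}_{\sf XMod}(-, \mathsf{Z})$ for an $\L$-local $\mathsf{Z}$ turns this arrow into $\lim_I \mathrm{Hom}_{\sf XMod}(\B_i, \mathsf{Z}) \to \lim_I \mathrm{Hom}_{\sf XMod}(\A_i, \mathsf{Z})$, which is the limit of the natural family of bijections $\mathrm{Hom}_{\sf XMod}(f_i, \mathsf{Z})$ and hence itself a bijection. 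Item~(1) then follows by realizing the pushout $f'$ of an $\L$-equivalence $f$ along a morphism $g$ as the colimit, in the arrow category, of a span whose three objects are $f$ itself, an identity morphism on the source of $f$, and an identity morphism on the codomain of $g$ — all of them $\L$-equivalences; alternatively one argues directly that $\mathrm{Hom}_{\sf XMod}(-, \mathsf{Z})$ carries the pushout square to a pullback square of sets in which one of the two maps being pulled back is a bijection, whence so is the parallel projection.

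Item~(3) I would prove by transfinite induction on $\beta \le \kappa$, showing that $\T_0 \to \T_\beta$ is an $\L$-equivalence. The base case is trivial; at a successor $\beta+1$ one composes $\T_0 \to \T_\beta$ (induction hypothesis) with the given $\L$-equivalence $\T_\beta \to \T_{\beta+1}$ and applies~(2); and at a limit ordinal $\lambda$ one has $\T_\lambda = \mathrm{colim}_{\beta<\lambda} \T_\beta$, so $\T_0 \to \T_\lambda$ is the colimit, in the category of morphisms, of the $\L$-equivalences $\T_0 \to \T_\beta$ for $\beta < \lambda$, and item~(4) applies — concretely, for an $\L$-local $\mathsf{Z}$ every transition map of the inverse system $(\mathrm{Hom}_{\sf XMod}(\T_\beta, \mathsf{Z}))_{\beta<\lambda}$ is a bijection, so the projection from its limit to the $\beta = 0$ term is a bijection. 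Taking $\lambda = \kappa$ gives the statement; regularity of $\kappa$ is exactly what guarantees that the indexing poset is $\kappa$-filtered. Everything above is routine once the orthogonality characterization is available, so the one point genuinely requiring care is to establish that characterization for an arbitrary coaugmented idempotent functor (rather than one presented by an explicit class of maps to be inverted); this is classical, and I would either spell out the short argument from idempotency or pin down a precise reference. The only other place to be attentive is the limit-ordinal step of~(3), where one must be sure that $\T_\lambda$ is literally the colimit of the preceding stages so that item~(4) genuinely applies.
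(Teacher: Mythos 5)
Your argument is correct, and it is essentially a fleshed-out version of what the paper leaves implicit: the paper's entire proof consists of quoting \cite[Proposition~1.3]{MR1796125} for item~(4) and asserting that (1)--(3) follow, with the later remark after \cref{f-local} indicating the intended derivations (item~(1) by the universal property of a pushout, as in \cite[Proposition~1.2.21]{Hirschhorn}, and item~(3) by induction, as in \cite[Proposition~1.2.20]{Hirschhorn}). Your route differs mildly in its logical organization: instead of importing (4) and specializing, you derive (1), (2) and (4) directly from the orthogonality characterization of $\L$-equivalences — which is precisely \cref{universalprop_localization}, stated in the paper \emph{after} this lemma and without proof — and then obtain (3) from (2) and (4) by transfinite induction. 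Your span-in-the-arrow-category realization of the pushout, the $Hom(-,\mathsf{Z})$-turns-pushouts-into-pullbacks argument, and the inverse-limit-of-bijections argument at limit ordinals are all sound, and your caveat that at limit stages $\T_\lambda$ must literally be the colimit of the preceding stages matches how the lemma is actually used in \cref{propnullify}. The one point you rightly flag is that the orthogonality characterization must be established for an arbitrary coaugmented idempotent functor independently of this lemma; it is classical and follows from $\ell^{\LX}=\L\ell^{\X}$ being an isomorphism (\cite[Proposition~1.1]{MR1796125}), so there is no circularity, but to keep the exposition clean you should either cite that reference or include the short uniqueness-of-extension argument, rather than leaning on \cref{universalprop_localization} as stated later in the text. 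What your approach buys is a self-contained proof not resting on external propositions; what the paper's approach buys is brevity, since all four statements are standard for coaugmented idempotent functors in any cocomplete category.
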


\begin{proof}
Property (4) is \cite[Proposition~1.3]{MR1796125}. Properties (1), (2) and (3) follow.
\end{proof}

Sometimes one comes across a localization functor by finding a full
reflexive subcategory $\mathcal L$ of $\sf XMod$ (for example of all abelian crossed
modules). The pair of adjoint functors $\sf U\colon \mathcal L \leftrightarrows \sf XMod\colon \sf F$
yields a localization functor $\sf L = \sf FU$. This is the approach taken by
Cassidy, H\'ebert, and Kelly in \cite{MR779198}.
There are other situations where one constructs localization functors by fixing
a set of morphisms which are required to become isomorphisms after localization.
In this way any set of morphisms $S$ defines a localization functor ${\sf L}_S$
inverting the elements of $S$.
By letting $f$ be the coproduct of all morphisms in $S$ it is good enough
to study localization functors
of the form ${\sf L}_f$. The existence and construction of such
functors follow from very general results, see for example
Bousfield's foundational work \cite{MR478159} or the more
recent account by Hirschhorn \cite{Hirschhorn} in a model categorical setting.

\begin{definition}\label{f-local}
Let $f$ be a morphism of crossed modules. A crossed module
$\sf T$ is \emph{${\sf L}_f$-local} if $Hom(f, \T)$ is an isomorphism.
A morphism $g$ in $\sf XMod$ is an \emph{${\sf L}_f$-equivalence} if
$Hom(g, \T)$ is an isomorphism for any ${\sf L}_f$-local $\sf T$.
\end{definition}

In this context, local objects and local equivalences coincide with
the notion introduced previously in Definition~\ref{local}. The properties
we stated in Proposition~\ref{lemma_colimitandequivalences} are then
analogous to \cite[Proposition~1.2.21]{Hirschhorn} for the first claim, 
by the universal property of a pushout, and the second statement follows 
by induction as \cite[Proposition~1.2.20]{Hirschhorn}. Note that $\L$-equivalences and $\L$-local objects defined in \cref{local} can also be expressed via their universal properties as in \cref{f-local}. 

\begin{prop}\label{universalprop_localization}
A crossed module
$\sf T$ is \emph{${\sf L}$-local} if $Hom(h, \T)$ is an isomorphism for any $\L$-equivalence~$h$.
A morphism $g$ in $\sf XMod$ is an \emph{${\sf L}$-equivalence} if
$Hom(g, \T)$ is an isomorphism for any ${\sf L}$-local $\sf T$.
\end{prop}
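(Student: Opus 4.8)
The plan is to reduce both assertions to a single orthogonality statement: every $\L$-local crossed module (in the sense of \cref{local}) is right orthogonal to every $\L$-equivalence. Throughout I will use only two elementary consequences of idempotency recorded in \cref{localization}: for every crossed module $\T$ the coaugmentation $\ell^{\T}$ is itself an $\L$-equivalence, since $\L\ell^{\T}=\ell^{\L\T}$ is an isomorphism; and $\L\T$ is $\L$-local, since $\ell^{\L\T}$ is an isomorphism.

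\emph{Step 1: orthogonality.} I would first check that if $\T$ is $\L$-local and $h\colon\A\to\B$ is an $\L$-equivalence, then $Hom(h,\T)\colon Hom(\B,\T)\to Hom(\A,\T)$ is a bijection. For surjectivity, given $g\colon\A\to\T$ one sets $\bar g:=(\ell^{\T})^{-1}\circ\L g\circ(\L h)^{-1}\circ\ell^{\B}$ and verifies $\bar g\circ h=g$ using naturality of $\ell$ together with the relation $\L g\circ\ell^{\A}=\ell^{\T}\circ g$. For injectivity, if $\bar g_{1}\circ h=\bar g_{2}\circ h$ one applies $\L$ and cancels the isomorphism $\L h$ to obtain $\L\bar g_{1}=\L\bar g_{2}$, then recovers each $\bar g_{i}$ from $\L\bar g_{i}$ through the isomorphism $\ell^{\T}$ and naturality. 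This is the standard argument and is entirely formal.

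\emph{Step 2: the characterization of local objects, and the forward direction for morphisms.} Step 1 already yields one implication. Conversely, suppose $Hom(h,\T)$ is an isomorphism for every $\L$-equivalence $h$. Applying this to $h=\ell^{\T}$, surjectivity of $Hom(\ell^{\T},\T)$ produces a retraction $r\colon\L\T\to\T$ with $r\circ\ell^{\T}=\mathrm{id}_{\T}$. Since $\L\T$ is $\L$-local, Step 1 makes $Hom(\ell^{\T},\L\T)$ injective; as $\ell^{\T}\circ r$ and $\mathrm{id}_{\L\T}$ both restrict to $\ell^{\T}$ along $\ell^{\T}$, we conclude $\ell^{\T}\circ r=\mathrm{id}_{\L\T}$, so $\ell^{\T}$ is an isomorphism and $\T$ is $\L$-local. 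For the forward direction of the statement about morphisms, let $g\colon\A\to\B$ be an $\L$-equivalence and $\T$ an $\L$-local crossed module; identifying $\T$ with $\L\T$ via $\ell^{\T}$ and using Step 1 for the $\L$-equivalences $\ell^{\A}$ and $\ell^{\B}$ (both with $\L$-local target $\L\T$), the naturality square for $\ell$ identifies $Hom(g,\T)$ with $Hom(\L g,\L\T)$, which is an isomorphism because $\L g$ is.

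\emph{Step 3: the converse for morphisms, and the main obstacle.} Assume finally that $Hom(g,\T)$ is an isomorphism for every $\L$-local $\T$, with $g\colon\A\to\B$; the task is to build a two-sided inverse of $\L g$ out of this one-sided universal property, which is the delicate point. Feeding the $\L$-local object $\L\A$ into the hypothesis yields a unique $s\colon\B\to\L\A$ with $s\circ g=\ell^{\A}$; setting $\sigma:=(\ell^{\L\A})^{-1}\circ\L s\colon\L\B\to\L\A$ and applying $\L$ to $s\circ g=\ell^{\A}$ gives $\sigma\circ\L g=\mathrm{id}_{\L\A}$. For the other composite, feeding $\L\B$ into the hypothesis and comparing $\L g\circ s$ with $\ell^{\B}$ after precomposition with $g$ (both equal $\ell^{\B}\circ g$ by naturality) forces $\L g\circ s=\ell^{\B}$; then $\L g\circ\sigma$ and $\mathrm{id}_{\L\B}$ agree after precomposition with the $\L$-equivalence $\ell^{\B}$, and since $\L\B$ is $\L$-local, Step 1 upgrades this to $\L g\circ\sigma=\mathrm{id}_{\L\B}$. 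Hence $\L g$ is an isomorphism and $g$ is an $\L$-equivalence. The only genuinely non-routine part of the whole argument is this last step: one must choose carefully which $\L$-local object to test against at each stage and invoke Step 1 to convert ``equal after precomposing with a coaugmentation'' into an honest equality; everything else is bookkeeping with naturality squares.
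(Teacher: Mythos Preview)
Your proof is correct and complete. The paper states this proposition without proof, treating it as a standard fact about coaugmented idempotent functors (and indeed it is, see e.g.\ \cite{MR1796125}); your argument supplies exactly the expected orthogonality proof, and Step~3 handles the only mildly subtle point cleanly.
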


\begin{definition}\label{null}
Let $f$ be a morphism of the form $\sf A \to 1$, where $\sf A$ is a crossed
module. The localization functor ${\sf L}_f$ is written $\sf P_A$ and is called a
\emph{nullification} functor. One calls the local objects $\sf A$-\emph{null}
or $\sf A$-\emph{local} and a crossed module $\sf T$ such that  $\sf P_A T = 1$
is called $\sf A$-\emph{acyclic}.
\end{definition}

In this article localization functors having the property
that the coaugmentation  morphism is always a regular epimorphism will
play a central role.

\begin{definition}\label{defregepi}
A localization functor is called a \emph{regular-epi localization} if for any crossed module $\T$ the morphism $\ell^\T \colon \T \to \L\T$ is a regular epimorphism. 
\end{definition} 


To prove that any nullification functor is a regular-epi localization we will need to describe in a precise way how $\sf P_A T$ is constructed by
successively killing all maps from $\sf A$ to~$\sf T$.

\begin{prop}\label{propnullify}
Let $\sf A$ be a crossed module. Then $\sf P_A$ is a 
regular-epi localization.
\end{prop}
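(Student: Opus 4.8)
The plan is to realize $\sf P_A T$ as a transfinite iterated pushout that kills all maps from $\sf A$, so that regular epimorphy is preserved at each stage and in the colimit. First I would fix a crossed module $\sf T$ and consider the set $S$ of all morphisms $a\colon \sf A \to \sf T$ in $\sf XMod$. Form the pushout
\[
\begin{tikzpicture}[descr/.style={fill=white},yscale=0.8,baseline=(A.base)]
\node (A) at (0,0) {$\coprod_{a \in S} \A$};
\node (B) at (0,2) {$\T$};
\node (C) at (3,2) {$\T^{(1)}$};
\node (D) at (3,0) {$1$};
\path[-stealth,font=\scriptsize]
(A.north) edge node[left] {$(a)_{a \in S}$} (B.south)
(B.east) edge node[above] {$ $} (C.west)
(D.north) edge node[right] {$ $} (C.south)
(A.east) edge node[below] {$ $} (D.west);
\end{tikzpicture}
\]
The left vertical map $\coprod_a \A \to 1$ is a regular epimorphism (it is surjective on each component), and by \cref{rem:eprireg} regular epimorphisms in $\sf XMod$ are exactly the levelwise surjections. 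Since pushouts of regular epimorphisms are regular epimorphisms in any regular category — and here one can also see it directly because, by the remark after \cref{lemma_lefttruncation}, the second level of a pushout in $\sf XMod$ is computed as in $\sf Grp$ where it is a quotient, while the first level of $\sf T^{(1)}$ is by construction a quotient of $T_1$ — the coaugmentation $\T \to \T^{(1)}$ is a regular epimorphism. By the universal property of the pushout, $\T \to \T^{(1)}$ is an $\sf P_A$-equivalence (it is the pushout of the $\sf P_A$-equivalence $\coprod_a \A \to 1$, using \cref{lemma_colimitandequivalences}(1)).

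Next I would iterate. Define $\T^{(0)} = \T$, and for a successor ordinal $\beta+1$ form $\T^{(\beta+1)}$ from $\T^{(\beta)}$ by the same pushout construction applied to the set of all maps $\A \to \T^{(\beta)}$; for a limit ordinal $\lambda$ set $\T^{(\lambda)} = \operatorname{colim}_{\beta < \lambda} \T^{(\beta)}$. Each successor map $\T^{(\beta)} \to \T^{(\beta+1)}$ is a regular epimorphism and an $\sf P_A$-equivalence by the argument above, and at limit stages the transition map $\T^{(0)} \to \T^{(\lambda)}$ is an $\sf P_A$-equivalence by \cref{lemma_colimitandequivalences}(3); it is also a regular epimorphism since filtered colimits of levelwise surjections are levelwise surjections (filtered colimits in $\sf XMod$ are computed component-wise, as they are in $\sf Grp$). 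A cardinality/smallness argument — $\sf A$ is a small object, so only a bounded number of new maps from $\sf A$ can appear — shows that for a suitable regular cardinal $\kappa$ the map $\T^{(\kappa)} \to \T^{(\kappa+1)}$ is an isomorphism, i.e. $\T^{(\kappa)}$ is $\sf A$-null. Then $\T \to \T^{(\kappa)}$ is an $\sf P_A$-equivalence into an $\sf A$-null object, hence is (up to the canonical iso) the coaugmentation $\ell^\T \colon \T \to \sf P_A T$, and it is a regular epimorphism as a transfinite composite of regular epimorphisms.

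Finally I would check that a transfinite composite of regular epimorphisms in $\sf XMod$ is a regular epimorphism: this again reduces, via \cref{rem:eprireg}, to the statement that a transfinite composite of levelwise-surjective group homomorphisms is levelwise surjective, which is clear at each component because surjectivity is preserved under composition and under filtered colimits in $\sf Grp$. I expect the main obstacle to be the bookkeeping in the smallness argument — making precise that the construction stabilizes — together with being careful that the pushout in $\sf XMod$ really does produce, at level one, a genuine quotient of $T_1^{(\beta)}$ (so that the coaugmentation is levelwise surjective); this uses the explicit description of cokernels and pushouts in $\sf XMod$ recalled in \cref{def epi reg} and the reference \cite[Proposition~11]{BH}, rather than any naive component-wise formula. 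Once these points are in place, the conclusion that $\sf P_A$ is a regular-epi localization follows by assembling the stages.
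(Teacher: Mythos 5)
Your proposal is correct and follows essentially the same route as the paper: the transfinite small-object construction that at each successor stage takes the cokernel (pushout along $\coprod \A \to 1$) of the evaluation map, observes this is a regular epimorphism and a $\P_{\A}$-equivalence by \cref{lemma_colimitandequivalences}, handles limit stages by (filtered) colimits, and uses smallness of $\A$ to show the process stabilizes at an $\A$-null object. The only caveat is your parenthetical claim that pushouts of regular epimorphisms are regular epimorphisms ``in any regular category'', which is not a valid general fact (regular categories only guarantee pullback-stability); but this is harmless here, since the direct argument you also give via the explicit cokernel description of \cref{def epi reg} --- which is exactly what the paper uses --- already yields the required levelwise surjectivity.
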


\begin{proof}
Let $\sf T$ be a crossed module. 
Consider the coproduct $\coprod \sf A$
taken over $Hom(\sf A, \sf T)$ and form an evaluation morphism
$ev\colon \coprod \sf A \rightarrow \sf T$ where the 
component map $\sf A \to T$ indexed by the morphism $g$ is
precisely~$g$. We then define $\sf T_0 = T$ and $\sf T_1$ is
the pushout of the diagram
\[
\sf 1 \leftarrow \coprod A \xrightarrow{ev} T_0
\]
In other words, $\sf T_1$ is the cokernel of the evaluation map
as described in \cref{def epi reg}, so $\sf T_0 \rightarrow \sf T_1$
is a regular epimorphism. It is also a $\sf P_A$-equivalence, being the
pushout of the $\sf P_A$-equivalence $\coprod \sf A \rightarrow \sf 1$ (see \cref{lemma_colimitandequivalences}).

For each successor ordinal $\beta+1$, the crossed module $\sf T_{\beta+1}$ 
is defined as above, $\sf T_{\beta+1} = (\sf T_{\beta})_1$, and for 
a limit ordinal $\beta$, we define 
$\sf T_{\beta} = colim_{\gamma < \beta} T_{\gamma}$.
As a composition of regular epimorphisms is again a regular epimorphism
we obtain by induction that the morphism $\sf T \rightarrow \sf T_{\beta+1}$
is a regular epimorphism. The case of limit ordinals is taken care of
by the fact that regular epimorphisms are coequalizers, which are preserved
under colimits.

To finish the proof we have to explain why this process stops, which
follows from Quillen's small object argument,
\cite[Proposition~10.5.16]{Hirschhorn} or \cite{MR0223432}, as is well-known.

We choose $\lambda$ to be the first infinite ordinal greater
than the number of chosen generators of $A_1$ and $A_2$. This
implies that a morphism out of $\sf A$ is determined by strictly
less than $\lambda$ images of elements so that $\sf A$ is
$\lambda$-small with respect to 
$\sf T_\lambda = colim_{\beta < \lambda} T_\beta$,
i.e. any morphism $g\colon \sf A \rightarrow T_\lambda$ factors through
some intermediate stage $\sf T_\beta$ for a certain $\beta < \lambda$. 
This comes from the fact that filtered colimits are created in sets 
and every chosen generator $t$ must be sent to some $\sf T_{\alpha_t}$.
The ordinal $\beta$ is then the union of all $\alpha_t$'s.

Therefore $g$ becomes trivial in $\sf T_{\beta +1}$, which shows
that the crossed module $\sf P_A T := \T_\lambda$ is $\sf A$-null.
The map $\sf T \rightarrow P_A T$ is an $\sf A$-equivalence by
Lemma~\ref{lemma_colimitandequivalences} since
it is obtained by iterating pushouts along $\sf P_A$-equivalences.
\end{proof}

Sometimes it is handy to rely on our group theoretical knowledge
to construct simple examples of localization functors and how they
behave on crossed modules. Recall the functor $\sf X$ from
Lemma~\ref{lemma_lefttruncation}.

\begin{prop}\label{prop_Lfromgroups}
Let $\varphi\colon G \to H$ be a group homomorphism.
The localization functor ${\sf L}_{{\sf X} \varphi}$ verifies
${\sf L}_{{\sf X} \varphi} {\sf X} T \cong {\sf X L}_\varphi T$ for any group $T$.
\end{prop}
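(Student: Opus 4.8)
The plan is to identify $\mathsf{X}\mathsf{L}_\varphi T$ with $\mathsf{L}_{\mathsf{X}\varphi}\mathsf{X}T$ through the universal property of localization (\cref{universalprop_localization}), so that no explicit construction of $\mathsf{L}_{\mathsf{X}\varphi}$ is required. Write $\ell^T\colon T \to \mathsf{L}_\varphi T$ for the coaugmentation of $\mathsf{L}_\varphi$ in $\mathsf{Grp}$. It suffices to establish two facts: first, that the crossed module $\mathsf{X}\mathsf{L}_\varphi T$ is $\mathsf{L}_{\mathsf{X}\varphi}$-local; second, that the morphism $\mathsf{X}(\ell^T)\colon \mathsf{X}T \to \mathsf{X}\mathsf{L}_\varphi T$ is an $\mathsf{L}_{\mathsf{X}\varphi}$-equivalence. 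Granting these, the coaugmentation $\mathsf{X}T \to \mathsf{L}_{\mathsf{X}\varphi}\mathsf{X}T$ and the morphism $\mathsf{X}(\ell^T)$ are both $\mathsf{L}_{\mathsf{X}\varphi}$-equivalences out of $\mathsf{X}T$ with $\mathsf{L}_{\mathsf{X}\varphi}$-local target (the coaugmentation being an $\mathsf{L}_{\mathsf{X}\varphi}$-equivalence by \cref{local} and \cref{localization}); the standard comparison argument, in which each of the two maps factors uniquely through the other and the two composites are forced to be identities by uniqueness of such factorizations, then produces a canonical isomorphism $\mathsf{L}_{\mathsf{X}\varphi}\mathsf{X}T \cong \mathsf{X}\mathsf{L}_\varphi T$.

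The one tool behind both facts is the adjunction $\mathsf{X}\dashv Tr$ of \cref{lemma_lefttruncation}. Fix a crossed module $\sf M$. Naturality of the adjunction bijection $Hom_{\sf XMod}(\mathsf{X}G, \mathsf{M}) \cong Hom_{\sf Grp}(G, Tr\,\mathsf{M})$ in the variable $G$ means that, for any homomorphism $\psi\colon G\to G'$ of groups, the precomposition map $Hom_{\sf XMod}(\mathsf{X}G', \mathsf{M}) \to Hom_{\sf XMod}(\mathsf{X}G, \mathsf{M})$ induced by $\mathsf{X}\psi$ is carried to the map $\psi^*\colon Hom_{\sf Grp}(G', Tr\,\mathsf{M}) \to Hom_{\sf Grp}(G, Tr\,\mathsf{M})$; in particular one of these is a bijection if and only if the other is. Taking $\psi=\varphi$ shows that $\mathsf{M}$ is $\mathsf{L}_{\mathsf{X}\varphi}$-local precisely when $Tr\,\mathsf{M}$ is $\mathsf{L}_\varphi$-local in $\mathsf{Grp}$. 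Since $Tr(\mathsf{X}\mathsf{L}_\varphi T) = \mathsf{L}_\varphi T$ is $\mathsf{L}_\varphi$-local by construction, the first fact follows.

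For the second fact I would take an arbitrary $\mathsf{L}_{\mathsf{X}\varphi}$-local crossed module $\mathsf{M}$ and apply the same naturality with $\psi = \ell^T$. The map $Hom_{\sf XMod}(\mathsf{X}(\ell^T), \mathsf{M})$ is then identified with $Hom_{\sf Grp}(\ell^T, Tr\,\mathsf{M})$, and the latter is a bijection because $Tr\,\mathsf{M}$ is $\mathsf{L}_\varphi$-local (by the first fact) while $\ell^T$ is, by definition of $\mathsf{L}_\varphi$, an $\mathsf{L}_\varphi$-equivalence. Hence $Hom_{\sf XMod}(\mathsf{X}(\ell^T), \mathsf{M})$ is a bijection for every $\mathsf{L}_{\mathsf{X}\varphi}$-local $\mathsf{M}$, which by \cref{universalprop_localization} is exactly the assertion that $\mathsf{X}(\ell^T)$ is an $\mathsf{L}_{\mathsf{X}\varphi}$-equivalence.

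I do not expect a genuine obstacle here: the argument is purely formal once the adjunction $\mathsf{X}\dashv Tr$ is in hand. The only points that deserve a little care are the naturality in both variables of the adjunction bijection $Hom_{\sf XMod}(\mathsf{X}G, \mathsf{M})\cong Hom_{\sf Grp}(G, Tr\,\mathsf{M})$ — this is precisely what lets one transport the notions of local object and of local equivalence between $\mathsf{Grp}$ and $\sf XMod$ — and the observation that nothing whatsoever needs to be said about the behaviour of $\mathsf{L}_{\mathsf{X}\varphi}$ at level one, since the universal property absorbs it.
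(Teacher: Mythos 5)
Your proposal is correct and follows essentially the same route as the paper: the paper's proof also uses the adjunction $\mathsf{X}\dashv Tr$ of \cref{lemma_lefttruncation} to see that a crossed module is ${\sf X}\varphi$-local exactly when its level-two group is $\varphi$-local (so ${\sf X L}_\varphi T$ is local) and that $\mathsf{X}$ carries ${\sf L}_\varphi$-equivalences to ${\sf L}_{{\sf X}\varphi}$-equivalences, which are precisely your two facts. You have merely written out the naturality of the adjunction bijection and the final comparison-of-two-localizations argument that the paper leaves implicit.
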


\begin{proof}
The adjunction in Lemma~\ref{lemma_lefttruncation} tells us that
a crossed module $\sf A$ is ${\sf X}\varphi$-local if and only
if $A_2$ is a $\varphi$-local group. In particular, ${\sf X L}_\varphi T$ is ${\sf X}\varphi$-local. Moreover, $\sf X$ sends
${\sf L}_\varphi$-equivalences to ${\sf L}_{{\sf X} \varphi}$-equivalences.
\end{proof}

In this situation, it is thus easy to localize a crossed module that is reduced to a group (granted that we know how to localize groups), but the effect on arbitrary crossed modules can be more surprising because the groups at level one are linked to the groups at level two via the connecting homomorphism.

The end of this section is devoted to illustrating the notion of localization functor by
a handful of natural examples. We give a non-exhaustive list of examples of localization functors of crossed modules: some of them are obtained by 
using the construction of nullification functors (\cref{example_PZ}, \cref{example_PZ2}), some are built using well-known constructions (\cref{Ab}, 
\cref{Nil}), and some are induced by an adjunction (\cref{I}). It is interesting to notice that some of the following examples already appear in the literature. 
In particular, the subcategories induced by the local objects of \cref{example_PZ} and \cref{example_PZ2} form a hereditary torsion theory \cite{BGtorsion}. 
We start with an important functor as it will be the key player in our counter-examples.

\begin{example}\label{example_PZ}
The nullification functor $\P_{\sf X\mathbb{Z}}$ with respect to
the crossed module $\sf X\mathbb{Z}$ is described as follows 
\[ 
\P_{\sf X\mathbb{Z}} \left( 
\begin{tikzpicture}[descr/.style={fill=white},yscale=0.7,baseline=(O.base)] 
\node (O) at (-5,-1) {$ $};
\node (F) at (-4.5,0) {$N_1$};
\node (F') at (-4.5,-2) {$N_2$};
\path[-stealth,font=\scriptsize]
(F.south) edge node[left] {$ \partial $} (F'.north);
\end{tikzpicture} 
 \right) = 
\begin{tikzpicture}[descr/.style={fill=white},yscale=0.7,baseline=(O.base)] 
\node (O) at (-1,-1) {$ $};
\node (C) at (-2.5,0) {$ N_1 / [N_2, N_1]$};
\node (C') at (-2.5,-2) {$1$};
\path[-stealth,font=\scriptsize]
(C.south) edge node[left] {$  $} (C'.north);
\end{tikzpicture} 
 \]
In this example, the construction detailed in Proposition~\ref{propnullify}
can be done in a single step, since the coproduct of all morphisms ${\sf X}\mathbb Z \to \N$ comes from a surjective group homomorphism 
$F \to N_2$, where $F$ is a free group. Hence, in the first step of the construction we construct the quotient of ${\sf X} F \to \N$ by killing its 
normal closure as introduced in Definition~\ref{def epi reg}. This kills obviously $N_2$ and quotients $N_1$ out by 
$inc(1)_{N_2}[Id(N_2)_{N_2},N_1] = [N_2, N_1]$. 

The map from $\sf N$ to this quotient is an 
${\sf X} \mathbb Z$-equivalence being the pushout of an
equivalence and this quotient is local (the bottom group is trivial).
From the point of view of reflexive subcategories, this localization functor
corresponds to the reflector associated with the subcategory of crossed modules
of the form $(A, 1)$ where $A$ is any abelian group and the connecting homomorphism
is the trivial homomorphism.
\end{example}

\begin{example}\label{example_PZ2}
We give the explicit description of the functor of nullification respectively to the crossed module $\mathbb{Z} \to 0$. The functor is then defined as follows:
  \[ 
\P_{ \mathbb{Z} \to 0} \left( 
\begin{tikzpicture}[descr/.style={fill=white},yscale=0.7,baseline=(O.base)] 
\node (O) at (-5,-1) {$ $};
\node (F) at (-4.5,0) {$N_1$};
\node (F') at (-4.5,-2) {$N_2$};
\path[-stealth,font=\scriptsize]
(F.south) edge node[left] {$ \partial^\N $} (F'.north);
\end{tikzpicture} 
 \right) = 
\begin{tikzpicture}[descr/.style={fill=white},yscale=0.7,baseline=(O.base)] 
\node (O) at (-1,-1) {$ $};
\node (C) at (-2.5,0) {$\partial^\N(N_1) $};
\node (C') at (-2.5,-2) {$N_2$};
\path[-stealth,font=\scriptsize]
(C.south) edge node[descr] {$ inc $} (C'.north);
\end{tikzpicture} 
  \]
Let us notice that the objects of the subcategory of $P_{ \mathbb{Z} \to 0}$-local objects are inclusions of normal subgroups as in \cref{examplexmod}. In terms of internal groupoids, this construction can be seen as the reflector of the category of internal groupoids to the category of internal equivalence relations (see example 2.5 in \cite{GE}).
 \end{example}

The next two examples already appeared in \cite{Norriethesis}. 

\begin{example}\label{Ab}
Another localization functor is given by the abelianization functor. We denote this functor by $ \sf Ab \colon \sf  XMod \to XMod$ and it is defined as follows
\[ 
{\sf Ab} \left( 
\begin{tikzpicture}[descr/.style={fill=white},yscale=0.7,baseline=(O.base)] 
\node (O) at (-5,-1) {$ $};
\node (F) at (-4.5,0) {$N_1$};
\node (F') at (-4.5,-2) {$N_2$};
\path[-stealth,font=\scriptsize]
(F.south) edge node[left] {$ \partial $} (F'.north);
\end{tikzpicture} 
 \right) = 
\begin{tikzpicture}[descr/.style={fill=white},yscale=0.7,baseline=(O.base)] 
\node (O) at (-1,-1) {$ $};
\node (C) at (-2.5,0) {$N_1/ [N_2,N_1] $};
\node (C') at (-2.5,-2) {$N_2/ [N_2,N_2]$};
\path[-stealth,font=\scriptsize]
(C.south) edge node[left] {$  \tilde{\partial} $} (C'.north);
\end{tikzpicture} 
  \]
  
  \end{example}

 \begin{example}\label{Nil}
The abelianization functor can be generalized and we can define the nilpotent functors. Indeed, in \cite{Norriethesis}, Norrie defined the notion of lower central series. 
Hence, we can quotient any crossed module by the $k$-th term in its lower central series and obtain a functor 
  \[ 
  { \sf Nil_k} \colon \sf XMod \to \sf XMod \colon \G \mapsto \G / \Gamma_k(\G) 
  \]
  where $\sf \Gamma_k(G)$ is the $k$-th term in the lower central series of $\G$.
  We give an explicit description of the functor ${ \sf Nil_2} \colon \sf XMod \to XMod$.
   \[ 
{ \sf Nil_2} \left( 
\begin{tikzpicture}[descr/.style={fill=white},yscale=0.7,baseline=(O.base)] 
\node (O) at (-5,-1) {$ $};
\node (F) at (-4.5,0) {$N_1$};
\node (F') at (-4.5,-2) {$N_2$};
\path[-stealth,font=\scriptsize]
(F.south) edge node[left] {$ \partial $} (F'.north);
\end{tikzpicture} 
 \right) = 
\begin{tikzpicture}[descr/.style={fill=white},yscale=0.7,baseline=(O.base)] 
\node (O) at (-1,-1) {$ $};
\node (C) at (-2.5,0) {${N_1}/{<[[N_2,N_2],N_1],[[N_2,[N_2,N_1]]>} $};
\node (C') at (-2.5,-2) {${N_2}/{[[N_2,N_2],N_2]}$};
\path[-stealth,font=\scriptsize]
(C.south) edge node[left] {$  \tilde{\partial} $} (C'.north);
\end{tikzpicture} 
   \]
%

\end{example} 

\begin{example}\label{C}
We can also consider the localization functor ${ \sf C} \colon \sf XMod \to XMod$ defined by
 \[ 
{ \sf C } \left( 
\begin{tikzpicture}[descr/.style={fill=white},yscale=0.7,baseline=(O.base)] 
\node (O) at (-5,-1) {$ $};
\node (F) at (-4.5,0) {$N_1$};
\node (F') at (-4.5,-2) {$N_2$};
\path[-stealth,font=\scriptsize]
(F.south) edge node[left] {$ \partial^\N $} (F'.north);
\end{tikzpicture} 
 \right) = 
\begin{tikzpicture}[descr/.style={fill=white},yscale=0.7,baseline=(O.base)] 
\node (O) at (-1,-1) {$ $};
\node (C) at (-2.5,0) {$1 $};
\node (C') at (-2.5,-2) {$N_2/\partial^\N(N_1)$};
\path[-stealth,font=\scriptsize]
(C.south) edge node[left] {$   $} (C'.north);
\end{tikzpicture} 
  \]
In fact, the localization functor ${ \sf C}$ is exactly the nullification functor $\P_{ \mathbb{Z} \xrightarrow{id} \mathbb{Z}}$.
\end{example}

\begin{example}\label{I}
We give a final example of a functor of localization of crossed modules ${ \sf I} \colon \sf XMod \to XMod$
 \[ 
{ \sf I} \left( 
\begin{tikzpicture}[descr/.style={fill=white},yscale=0.7,baseline=(O.base)] 
\node (O) at (-5,-1) {$ $};
\node (F) at (-4.5,0) {$N_1$};
\node (F') at (-4.5,-2) {$N_2$};
\path[-stealth,font=\scriptsize]
(F.south) edge node[left] {$ \partial^\N $} (F'.north);
\end{tikzpicture} 
 \right) = 
\begin{tikzpicture}[descr/.style={fill=white},yscale=0.7,baseline=(O.base)] 
\node (O) at (-1,-1) {$ $};
\node (C) at (-2.5,0) {$N_2$};
\node (C') at (-2.5,-2) {$N_2$};
\path[-stealth,font=\scriptsize]
(C.south) edge node[left] {$  Id $} (C'.north);
\end{tikzpicture} 
  \]
This localization functor is induced by the adjunction between the truncation functor, introduced in \cref{lemma_lefttruncation}, $Tr \colon {\sf XMod \to Grp} \colon (T_1,T_2,\partial^\T) \mapsto T_2$ and the functor ${\sf R \colon Grp \to XMod }$ sending $T$ to $(T,T,Id)$, which plays here the role of right adjoint of $Tr$.
 \end{example}

\begin{remark}
The functors considered in Examples \ref{example_PZ}, \ref{example_PZ2}, \ref{Ab}, \ref{Nil} and \ref{C} are regular-epi localizations. In particular, every nullification functor is a regular-epi localization. However, the converse is not true as illustrated by 
the functor $\sf Ab$ of \cref{Ab}. There is a large collection of regular-epi localization functors. Indeed, similarly to the observation in \cite{CasacubertaAnderson}, if $f$ is a regular epimorphism, then the functor $L_f$ is a regular-epi localization functor.


\end{remark}

From now on, every localization functor that we consider is a regular-epi localization.

\section{Construction of fiberwise localization}
In this section, we study the concept of fiberwise localization for an exact sequence of crossed modules. 
We show how to construct such a fiberwise localization when the exact sequence satisfies a certain normality condition,
reminiscent of condition $(N)$ in \cite{GE}. The authors introduced this condition $(N)$ to obtain a weaker context (than abelian categories), in which a 
torsion theory gives rise to a monotone-light factorization system. At the end of the section, we investigate this condition in detail and show it is necessary to obtain a fiberwise localization.

\begin{definition}
Let ${ \sf L} \colon {\sf XMod} \to {\sf XMod}$ be a localization functor. An exact sequence 
\[ 
\begin{tikzpicture}\label{exactseq}[descr/.style={fill=white},scale=0.8,baseline=(A.base),
xscale=1.2] 
\node (A) at (0,0) {$\T$};
\node (B) at (2.5,0) {$\Q$};
\node (C) at (-2.5,0) {$\N$};
\node (O1) at (-4.5,0) {$1$};
\node (O2) at (4.5,0) {$1$};
\path[-stealth,font=\scriptsize]
(C.east) edge node[above] {$\kappa$} (A.west)
;
\path[-stealth,font=\scriptsize]
(O1) edge node[above] {$ $} (C)
(B) edge node[above] {$ $} (O2)
(A) edge node[above] {$\alpha$} (B);
\end{tikzpicture} 
\]
admits a fiberwise localization if there exists such a commutative diagram of exact sequences
\begin{center}
\begin{tikzpicture}[descr/.style={fill=white},baseline=(A.base),scale=0.8] 
\node (A) at (0,0) {$\T$};
\node (B) at (2.5,0) {$\Q$};
\node (C) at (-2.5,0) {$\N$};
\node (A') at (0,-2) {$\E$};
\node (B') at (2.5,-2) {$\Q$};
\node (C') at (-2.5,-2) {$\LN$};
\node (O1) at (-4.5,0) {$1$};
\node (O1') at (-4.5,-2) {$1$};
\node (O2) at (4.5,0) {$1$};
\node (O2') at (4.5,-2) {$1$};
\path[-stealth,font=\scriptsize]
(C.east) edge node[above] {$\kappa$} (A.west)
;
\draw[commutative diagrams/.cd, ,font=\scriptsize]
(B) edge[commutative diagrams/equal] (B');
\path[-stealth,font=\scriptsize]
(C'.east) edge node[above] {$j $} (A'.west)
(O1) edge node[above] {$ $} (C)
(B) edge node[above] {$ $} (O2)
(B') edge node[above] {$ $} (O2')
(O1') edge node[above] {$ $} (C')
 (C.south) edge node[left] {$ \ell^{\N} $}  (C'.north)
(A') edge node[above] {$ p$} (B') ([yshift=2pt]A'.east)
(A) edge node[above] {$\alpha$} (B);
\path[-stealth,font=\scriptsize]
(A.south) edge node[left] {$ g $} (A'.north);
\end{tikzpicture} 
 \end{center} where $g$ is an $\L$-equivalence.
\end{definition}


%
We give a sufficient condition on this exact sequence to construct a fiberwise localization.

\begin{prop}
Let $\L \colon \sf XMod \to XMod$ be a regular-epi localization functor. Any exact sequence of crossed modules such that $\kappa(\ker(\ell^\N))$ is a 
normal subcrossed module of $\T$,
\begin{center}
\begin{tikzpicture}[descr/.style={fill=white},scale=0.8,baseline=(A.base),
xscale=1.2] 
\node (A) at (0,0) {$\T$};
\node (B) at (2.5,0) {$\Q$};
\node (C) at (-2.5,0) {$\N$};
\node (O1) at (-4.5,0) {$1$};
\node (O2) at (4.5,0) {$1$};
\path[-stealth,font=\scriptsize]
(C.east) edge node[above] {$\kappa$} (A.west)
;
\path[-stealth,font=\scriptsize]
(O1) edge node[above] {$ $} (C)
(B) edge node[above] {$ $} (O2)
(A) edge node[above] {$\alpha$} (B);
\end{tikzpicture} 
 \end{center}
admits a fiberwise localization.
\end{prop}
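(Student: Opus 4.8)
The plan is to realise $\E$ as an explicit quotient of $\T$ and to recognise that quotient as a pushout, so that the $\L$-equivalence $g$ is obtained essentially for free. Write $\K := \ker(\ell^{\N})$, a normal subcrossed module of $\N$. Since $\L$ is a regular-epi localization, $\ell^{\N}\colon \N \to \LN$ is a regular epimorphism; $\sf XMod$ being semi-abelian, $\ell^{\N}$ is then the cokernel of the inclusion $\K \hookrightarrow \N$, so that $\LN$ is the levelwise quotient $\N/\K$.

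First I would form the pushout of $\ell^{\N}$ along $\kappa$. Writing the cokernel $\ell^{\N}$ as the coequalizer of the inclusion $\K \hookrightarrow \N$ and the zero morphism, and using that a pushout of a coequalizer is a coequalizer, this pushout is the coequalizer of $\kappa|_{\K}$ and $0$, that is, the cokernel of the composite $\K \hookrightarrow \N \xrightarrow{\kappa} \T$. Here the hypothesis enters: $\kappa(\K)$ is a normal subcrossed module of $\T$, so by the remark following \cref{def epi reg} this cokernel is simply the levelwise quotient $\E := \T/\kappa(\K)$, with structure morphism the projection $g\colon \T \to \E$. Since $g$ is a pushout of the $\L$-equivalence $\ell^{\N}$, the first statement of \cref{lemma_colimitandequivalences} shows that $g$ is an $\L$-equivalence, and the pushout square is precisely the left-hand square of the diagram we want, with $j\colon \LN \to \E$ the comparison morphism.

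It then remains to produce the bottom short exact sequence. As $\kappa(\K) \subseteq \kappa(\N) = \ker(\alpha)$, the morphism $\alpha$ factors uniquely through $g$ as $\alpha = p\circ g$ with $p\colon \E \to \Q$; since $\alpha$ is surjective on both levels so is $p$, hence $p$ is a regular epimorphism. By the third Noether isomorphism theorem $\ker(p) = \kappa(\N)/\kappa(\K)$, which is isomorphic to $\N/\K = \LN$ because $\kappa$ is a monomorphism. This identifies $j$ with the normal monomorphism $\LN \cong \ker(p) \hookrightarrow \E$ and yields the exact sequence $1 \to \LN \xrightarrow{j} \E \xrightarrow{p} \Q \to 1$. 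Commutativity of the right-hand square is the identity $\alpha = p\circ g$; commutativity of the left-hand square is built into the pushout; a short unwinding checks that the two descriptions of $j$ agree.

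The one delicate point is the identification of the pushout. One must notice that a pushout along a cokernel is again a cokernel — so the generally awkward pushout in $\sf XMod$ collapses to a cokernel — and then invoke the normality hypothesis to replace the cokernel formula of \cref{def epi reg}, which is a plain levelwise quotient \emph{only} when $\kappa(\K)$ is normal, by the transparent object $\T/\kappa(\K)$. This is exactly where the hypothesis is used, and without it the argument breaks down. Everything else is diagram chasing together with the semi-abelian machinery (regular epimorphisms are normal epimorphisms, the Noether theorems) recalled before \cref{usefulpropopointed}.
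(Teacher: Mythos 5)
Your proposal is correct, and it builds exactly the same object as the paper: $\E = \T/\kappa(\ker(\ell^{\N}))$, with the exactness of the bottom row obtained from the Noether isomorphism theorems (the paper cites Norrie's first and second isomorphism theorems where you invoke the third; this is the same content). The only real divergence is the key verification that $g\colon \T \to \E$ is an $\L$-equivalence: you recognise $g$ as the pushout of $\ell^{\N}$ along $\kappa$ — using that $\ell^{\N}$, being a regular hence normal epimorphism, is the cokernel of $\ker(\ell^{\N})\hookrightarrow \N$, and that pushing out this cokernel along $\kappa$ yields the cokernel of $\kappa|_{\ker(\ell^{\N})}$ — and then quote item (1) of \cref{lemma_colimitandequivalences}; the paper instead checks the universal property of \cref{universalprop_localization} directly, factoring any $\beta\colon\T\to\G$ with $\G$ local through the quotient because $\beta\circ\kappa$ factors through $\ell^{\N}$. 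The two arguments are equivalent (the pushout lemma is itself proved by that universal-property computation), and your packaging is the same device the paper uses in \cref{propnullify}; it buys a slightly cleaner proof at the cost of making explicit the pushout-of-a-cokernel identification, which you justify correctly. Your placement of the normality hypothesis is also accurate: it is what lets the cokernel of \cref{def epi reg} collapse to the levelwise quotient, which is in turn what makes the kernel of $p$ come out as $\LN$ rather than a larger quotient of $\N$.
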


\begin{proof}
By assuming that $\ell^{\N} \colon \N \to \LN$ is a regular epimorphism and that $\kappa(\ker(\ell^\N))$ is a normal crossed module of $\T$, 
we can construct the following diagram

\begin{center}
\begin{tikzpicture}[descr/.style={fill=white},scale=0.8,baseline=(A.base)] 
\node (C'') at (-2.5,2) {$\ker(\ell^{\N})$};
\node (A) at (0,0) {$\T$};
\node (B) at (2.5,0) {$\Q$};
\node (C) at (-2.5,0) {$\N$};
\node (A') at (0,-2) {$\T/ \kappa(\ker(\ell^{\N}))$};
\node (B') at (2.5,-2) {$\Q$};
\node (C') at (-2.5,-2) {$\LN$};
\node (O1) at (-4.5,0) {$1$};
\node (O2) at (4.5,0) {$1$};
\draw[commutative diagrams/.cd, ,font=\scriptsize]
(B) edge[commutative diagrams/equal] (B');
\path[-stealth,font=\scriptsize]
  (C'') edge node[descr] {$  \kappa' $}  (A)
 (C'') edge node[left] {$  $}  (C.north)
(C.east) edge node[above] {$\kappa$} (A.west)
;
\path[-stealth,font=\scriptsize]
(C'.east) edge node[above] {$ $} (A'.west)
(O1) edge node[above] {$ $} (C)
(B) edge node[above] {$ $} (O2)
 (C.south) edge node[left] {$ \ell^{\N} $}  (C'.north)
(A') edge node[above] {$ $} (B') 
(A) edge node[above] {$\alpha$} (B);
\path[-stealth,font=\scriptsize,dashed,font=\scriptsize]
(A.south) edge node[left] {$ f $} (A'.north);
\end{tikzpicture} 
 \end{center}
 where $f$ is the cokernel of the normal morphism $\kappa'=\kappa\mid_{\ker(\ell^\N)} \colon \ker(\ell^{\N}) \to \T$. The lower sequence is exact via the first and second isomorphism theorems for crossed modules (Theorems 2.1 and 2.2 in \cite{Norriethesis}). 
\newline

 To end this proof, we need to show that $f \colon \T \to \T / \kappa(\ker(\ell^{\N}))$ is an $\L$-equivalence. Let $\G$ be a local object in $\sf XMod$ and $\beta \colon \T \to \G$ be a morphism of crossed modules. We need to prove that there exists a unique morphism of crossed modules $\tilde{\beta}$ from $\T/ \kappa(\ker(\ell^{\N}))$ to $\G$ (\cref{universalprop_localization}). This morphism is induced by the universal property of the cokernel $f$ and the universal property of the localization. To use the universal property of the cokernel
 $f$ \begin{center}
\begin{tikzpicture}[descr/.style={fill=white},baseline=(A.base),
scale=0.8] 
\node (A) at (0,0) {$\T$};
\node (B) at (2.5,0) {${\T}/{\kappa(\ker(\ell^{\N}))}$};
\node (C) at (-2.5,0) {$\ker(\ell^{\N})$};
\node (A') at (2.5,-2) {$\G$};
\path[-stealth,font=\scriptsize]
(C.east) edge node[above] {$\kappa'$} (A.west)
;
\path[-stealth,font=\scriptsize]
(A) edge node[above] {$f$} (B)
(A) edge node[descr] {$ \beta $} (A');
\path[dashed,-stealth,font=\scriptsize]
(B) edge node[right] {$ \tilde{\beta} $} (A');
\end{tikzpicture} 
 \end{center}
 we need to prove that $\beta \circ \kappa' $ is the zero morphism.
  This can be deduced from the commutativity of the following diagram
   
   \begin{center}
\begin{tikzpicture}[descr/.style={fill=white},baseline=(A.base),scale=0.8] 
\node (C'') at (-2.5,2) {$\ker(\ell^{\N})$};
\node (A) at (0,0) {$\T$};
\node (C) at (-2.5,0) {$\N$};
\node (A') at (0,-2) {$\G$};
\node (C') at (-2.5,-2) {$\LN$};
\path[-stealth,font=\scriptsize]
  (C'') edge node[descr] {$ \kappa' $}  (A)
 (C'') edge node[left] {$  $}  (C.north)
(C.east) edge node[above] {$\kappa$} (A.west)
;
\path[-stealth,font=\scriptsize]
 (C.south) edge node[left] {$ \ell^{\N} $}  (C'.north)
(A.south) edge node[left] {$ \beta $} (A'.north);
\path[-stealth,font=\scriptsize,dashed,font=\scriptsize]
(C'.east) edge node[above] {$\psi $} (A'.west);
\end{tikzpicture} 
 \end{center}
 where $\psi$ is induced by the universal property of the localization.
 So we can conclude that $f$ is an $\L$-equivalence. Hence we built a fiberwise localization.  
\end{proof}

In the previous proposition, we gave a condition on the exact sequence of crossed modules ensuring the existence of a fiberwise localization. 
Now, we prove that this condition is actually mandatory.

\begin{prop}
Let $\L \colon \sf XMod \to XMod$ be a regular-epi localization functor. If the following exact sequence of crossed modules
\begin{equation}\label{exactseqfiber}
\begin{tikzpicture}[descr/.style={fill=white},baseline=(A.base),scale=0.8] 
\node (A) at (0,0) {$\T$};
\node (B) at (2.5,0) {$\Q$};
\node (C) at (-2.5,0) {$\N$};
\node (O1) at (-4.5,0) {$1$};
\node (O2) at (4.5,0) {$1$};
\path[-stealth,font=\scriptsize]
(C.east) edge node[above] {$\kappa$} (A.west)
;
\path[-stealth,font=\scriptsize]
(O1) edge node[above] {$ $} (C)
(B) edge node[above] {$ $} (O2)
(A) edge node[above] {$\alpha$} (B);
\end{tikzpicture} 
 \end{equation}
admits a fiberwise localization, then the kernel $\kappa(\ker(\ell^{\N}))$ is a normal subcrossed module of $\T$.
\end{prop}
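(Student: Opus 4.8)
The plan is to show that, given any fiberwise localization of the sequence, the subcrossed module $\kappa(\ker(\ell^{\N}))$ of $\T$ coincides with the kernel of the middle vertical map $g\colon \T\to\E$, and then to invoke Remark~\ref{rem:mononormal}: a kernel is automatically a normal subcrossed module.

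First I would fix a fiberwise localization, that is, a commutative diagram of exact sequences with top row $1\to\N\xrightarrow{\kappa}\T\xrightarrow{\alpha}\Q\to1$, bottom row $1\to\LN\xrightarrow{j}\E\xrightarrow{p}\Q\to1$, left vertical $\ell^{\N}$, middle vertical $g$, and right vertical $\mathrm{id}_{\Q}$. I would then establish the equality $\kappa(\ker(\ell^{\N}))=\ker(g)$ of subcrossed modules of $\T$ (equivalently, of their component subgroups at both levels) by a short diagram chase. For the inclusion $\subseteq$, commutativity of the left-hand square gives $g\circ\kappa=j\circ\ell^{\N}$, so every element of $\ker(\ell^{\N})$ is sent by $g\circ\kappa$ to $j(1)=1$. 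For the inclusion $\supseteq$, take $t\in\ker(g)$; commutativity of the right-hand square gives $p\circ g=\alpha$, hence $\alpha(t)=p(g(t))=1$, so by exactness of the top row $t=\kappa(n)$ for a (unique, since $\kappa$ is a monomorphism) element $n\in\N$; then $j(\ell^{\N}(n))=g(\kappa(n))=g(t)=1$, and since $j$ is a monomorphism (being the kernel of $p$) we get $\ell^{\N}(n)=1$, i.e. $n\in\ker(\ell^{\N})$ and $t\in\kappa(\ker(\ell^{\N}))$.

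Once this identification is in place, $\ker(g)$ is by definition the kernel of a morphism of crossed modules, hence a normal subcrossed module of $\T$ by Remark~\ref{rem:mononormal}, and therefore so is $\kappa(\ker(\ell^{\N}))$. I do not anticipate a genuine obstacle here: this converse is soft. The only points needing mild care are that the chase must be run simultaneously at level one and level two, and that $\kappa(\ker(\ell^{\N}))$ should first be recognized as a bona fide subcrossed module of $\T$, namely the image under the monomorphism $\kappa$ of the normal subcrossed module $\ker(\ell^{\N})\triangleleft\N$. It is worth observing that neither the regular-epi hypothesis on $\L$ nor the fact that $g$ is an $\L$-equivalence enters this direction; only the shape of the commutative diagram of exact sequences is used. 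Alternatively, one could package the chase categorically: since the right vertical map is a monomorphism, the left-hand square is a pullback by Proposition~\ref{usefulpropopointed}(1), and then $\ker(g)$ is the pullback of $\ker(\ell^{\N})$ along this square, which is exactly $\kappa(\ker(\ell^{\N}))$.
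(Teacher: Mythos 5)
Your proposal is correct and is essentially the paper's own argument: the paper also identifies $\kappa(\ker(\ell^{\N}))$ with $\ker(g)$ (there called $\ker(f)$) and concludes normality because kernels are normal subcrossed modules (\cref{rem:mononormal}). The only cosmetic difference is that you run the identification as an element-wise chase at both levels, whereas the paper obtains it directly from \cref{usefulpropopointed} (the left square is a pullback, so the vertical kernels agree) --- precisely the categorical packaging you mention as an alternative at the end.
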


\begin{proof}
Suppose we have a fiberwise localization for the exact sequence \eqref{exactseqfiber} with $\ell^{\N}$ a regular epimorphism. 
It means that there exists $\E \in \sf XMod$ and a diagram

\begin{center}
\begin{tikzpicture}[descr/.style={fill=white},baseline=(A.base),scale=0.8] 
\node (A'') at (0,2) {$\ker(f)$};
\node (C'') at (-2.5,2) {$\ker(\ell^{\N})$};
\node (A) at (0,0) {$\T$};
\node (X) at (-1.25,-1) {$(1)$};
\node (B) at (2.5,0) {$\Q$};
\node (C) at (-2.5,0) {$\N$};
\node (A') at (0,-2) {$\E$};
\node (B') at (2.5,-2) {$\Q$};
\node (C') at (-2.5,-2) {$\LN$};
\node (O1) at (-4.5,0) {$1$};
\node (O1') at (-4.5,-2) {$1$};
\node (O2) at (4.5,0) {$1$};
\node (O2') at (4.5,-2) {$1$};
\path[-stealth,font=\scriptsize]

 (C'') edge node[left] {$  $}  (C.north)
(C.east) edge node[above] {$\kappa$} (A.west)
;
\draw[commutative diagrams/.cd, ,font=\scriptsize]
(B) edge[commutative diagrams/equal] (B');
\path[-stealth,font=\scriptsize]
  (C'') edge node[left] {$  $}  (A'')
  (A'') edge node[left] {$  $}  (A)
(C'.east) edge node[above] {$j $} (A'.west)
(O1) edge node[above] {$ $} (C)
(B) edge node[above] {$ $} (O2)
(B') edge node[above] {$ $} (O2')
(O1') edge node[above] {$ $} (C')
 (C.south) edge node[left] {$ \ell^{\N} $}  (C'.north)
(A') edge node[above] {$ p$} (B') ([yshift=2pt]A'.east)
(A) edge node[above] {$\alpha$} (B);
\path[-stealth,font=\scriptsize]
(A.south) edge node[left] {$ f $} (A'.north);
\end{tikzpicture} 
 \end{center} We use \cref{usefulpropopointed} to observe that (1) is a pullback and that $\ker(\ell^{\N})$ is isomorphic to $\ker(f)$. 
Hence, we can conclude that $\kappa(\ker(\ell^{\N}))$ is a normal subcrossed module of $\T$.
\end{proof} 

Thanks to the two previous propositions we can now state the following theorem.

\begin{theorem}\label{thmfiberwiselocalization}
Let $\sf L \colon \sf XMod \to XMod$ be a regular-epi localization functor. An exact sequence of crossed modules
\begin{equation}
\begin{tikzpicture}[descr/.style={fill=white},baseline=(A.base),scale=0.8] 
\node (A) at (0,0) {$\T$};
\node (B) at (2.5,0) {$\Q$};
\node (C) at (-2.5,0) {$\N$};
\node (O1) at (-4.5,0) {$1$};
\node (O2) at (4.5,0) {$1$};
\path[-stealth,font=\scriptsize]
(C.east) edge node[above] {$\kappa$} (A.west)
;
\path[-stealth,font=\scriptsize]
(O1) edge node[above] {$ $} (C)
(B) edge node[above] {$ $} (O2)
(A) edge node[above] {$\alpha$} (B);
\end{tikzpicture} 
 \end{equation}
admits a fiberwise localization if and only if $\kappa(\ker(\ell^{\N}))$ is a normal subcrossed module of~$\T$. \hfill{\qed}
\end{theorem}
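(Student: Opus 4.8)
The plan is to observe that Theorem~\ref{thmfiberwiselocalization} is nothing more than the conjunction of the two preceding propositions, so the proof is simply to assemble them. The ``if'' direction is supplied by the first proposition of this section: starting from the assumption that $\kappa(\ker(\ell^{\N}))$ is normal in $\T$, one forms the cokernel $f \colon \T \to \T/\kappa(\ker(\ell^{\N}))$, uses the first and second isomorphism theorems for crossed modules to identify the bottom row as an exact sequence with kernel $\LN$, and checks via the universal property of the cokernel together with that of the localization that $f$ is an $\L$-equivalence. The ``only if'' direction is the second proposition: given a fiberwise localization with total object $\E$ and map $f\colon \T \to \E$, one restricts $\kappa$ to $\ker(\ell^{\N})$, invokes \cref{usefulpropopointed} to see that the left-hand square is a pullback and that $\ker(\ell^{\N}) \cong \ker(f)$, and concludes that $\kappa(\ker(\ell^{\N}))$ is a kernel in $\sf XMod$, hence (by \cref{rem:mononormal}) a normal subcrossed module of $\T$.

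Concretely, I would write the proof as: ``This is the combination of the two propositions above.'' If one wishes to be slightly more explicit without repeating the arguments, one can spell out that the equivalence of the two conditions passes through the intermediate fact, established in the second proposition, that in \emph{any} fiberwise localization the fiber over $\Q$ is forced to be $\LN$ and the map $\T \to \E$ is forced to have kernel $\kappa(\ker(\ell^{\N}))$; conversely the first proposition shows this forced candidate is always realizable once normality holds. There is no genuine mathematical obstacle at this stage — all the work has been done in the two propositions — so the only thing to be careful about is making sure the hypotheses match (both propositions assume $\L$ is a regular-epi localization, which is also the standing assumption from the end of Section~2, and both use that $\ell^{\N}$ is then a regular epimorphism so that $\ker(\ell^{\N})$ behaves well and $\T/\kappa(\ker(\ell^{\N}))$ makes sense as the cokernel of $\kappa'$).

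Since the statement already carries a \qed at the end of its line in the excerpt, the ``proof'' is essentially a one-line pointer. I would therefore simply note that it follows immediately by combining the necessity and sufficiency results just proved, and leave it at that.

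\begin{proof}
This is the combination of the two propositions above: the first shows that normality of $\kappa(\ker(\ell^{\N}))$ in $\T$ is sufficient for the existence of a fiberwise localization, and the second shows it is necessary.
\end{proof}
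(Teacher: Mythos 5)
Your proposal is correct and matches the paper exactly: the theorem is stated with a \qed precisely because it is the conjunction of the two preceding propositions (sufficiency and necessity of the normality condition), and the paper offers no further argument. Your one-line proof pointing to those two results, with the remark that the standing regular-epi hypothesis is what both propositions use, is all that is needed.
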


\begin{remark}
This theorem can be generalized and holds in any semi-abelian category, \cite{GranScherer}. Our restricted setup, namely that of crossed modules allows us
to produce concrete examples where one can actually check the normality condition.

\end{remark}
We emphasize that the normality condition for $\kappa(\ker(\ell^\N))$ in \cref{thmfiberwiselocalization} is actually the same as the one called $(N)$ in \cite{GE}. It is interesting to notice that 
this condition appears in completely different contexts and for different purposes. In our case, it is the adequate condition to obtain a fiberwise localization. 
In \cite{GE}, they require that a torsion theory (in a normal category) satisfies this condition to obtain a monotone-light factorization system.
We investigate this condition of normality in the category of crossed modules and reexpress it as an easier statement to verify. 
 Indeed, we prove that several items in the definition of normal subcrossed modules always hold.

\begin{prop}
 Let $\kappa\colon N := (N_1,N_2, \partial^{\N}) \rightarrow \T :=(T_1,T_2,\partial^{\T})$ be a normal monomorphism of crossed modules, then $\kappa(\ker(\ell^\N))$ is a subcrossed module of $\T$ and we have the two following properties:
\begin{itemize}
    \item[$(i)$] $\kappa_2(ker(\ell_2^{\N}))$ is a normal subgroup of $T_2$.
    \item[$(ii)$] For any $t_2 \in T_2$ and $n_1 \in ker(\ell_1^{\N})$ then $^{t_2}\kappa_1(n_1) \in \kappa_1(ker(\ell_1^{\N}))$.
\end{itemize}
\end{prop}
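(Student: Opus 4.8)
The plan is to verify the two stated properties $(i)$ and $(ii)$ directly, exploiting the functoriality of localization together with the fact that $\kappa$ is a normal monomorphism. Since $\kappa$ is a normal monomorphism, Remark~\ref{rem:mononormal} tells us $\N$ is a normal subcrossed module of $\T$, so conditions (1)--(3) of Definition~\ref{defnormalcrossed} hold for $\N$ inside $\T$; in particular $N_2 \triangleleft T_2$, the group $T_2$ acts on $N_1$, and $[N_2, T_1] \subseteq N_1$. Writing $\K := \ker(\ell^\N)$, this is a subcrossed module of $\N$, hence (identifying $\N$ with a subobject of $\T$ via $\kappa$) a subcrossed module of $\T$; this gives the first assertion. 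For notational simplicity I would drop $\kappa_1,\kappa_2$ and treat $N_1 \le T_1$, $N_2 \le T_2$, $K_1 = \ker(\ell_1^\N) \le N_1$, $K_2 = \ker(\ell_2^\N) \le N_2$.

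For $(i)$, the goal is $t_2 K_2 t_2^{-1} \subseteq K_2$ for every $t_2 \in T_2$. The key observation is that conjugation by $t_2$ restricts to a group automorphism of $N_2$ (because $N_2 \triangleleft T_2$), and one checks this automorphism commutes suitably with $\ell_2^\N$. More precisely: the action of $T_2$ on $\N$ is by automorphisms of crossed modules (this is exactly what it means for $\N$ to be normal in $\T$ together with the action of $\T$ on itself by conjugation), so conjugation by $t_2$ gives an automorphism $c_{t_2}$ of the crossed module $\N$. By functoriality of $\L$ we get $\ell^\N \circ c_{t_2} = \L(c_{t_2}) \circ \ell^\N$, and $\L(c_{t_2})$ is an isomorphism. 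Hence $c_{t_2}$ carries $\ker(\ell^\N)$ into itself, which at level $2$ says precisely $t_2 K_2 t_2^{-1} \subseteq K_2$. The same argument at level $1$ shows $^{t_2}n_1 \in K_1$ for $n_1 \in K_1$, which is exactly $(ii)$.

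Alternatively, and perhaps more in the spirit of keeping the argument elementary, one can argue $(i)$ and $(ii)$ by hand: given $n \in K_2$ and $t_2 \in T_2$, we have $t_2 n t_2^{-1} \in N_2$ since $N_2 \triangleleft T_2$; applying $\ell_2^\N$ we want to see $\ell_2^\N(t_2 n t_2^{-1}) = 1$. Here the subtlety is that $t_2$ need not lie in $N_2$, so one cannot directly use that $\ell_2^\N$ is a group homomorphism on $T_2$ — it is only defined on $N_2$. This is where the functoriality argument above is genuinely needed: conjugation by $t_2$ is an \emph{endomorphism of $\N$}, not merely a computation inside a single group, and it is this structural fact that makes $\ell^\N$ respect it.

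The main obstacle is precisely the one just flagged: $\ell^\N$ is a natural transformation on $\sf XMod$, but the conjugation $c_{t_2}$ by an element $t_2 \in T_2 \setminus N_2$ is \emph{a priori} only an automorphism of the ambient $\T$, and one must argue carefully that it restricts to an automorphism of the subcrossed module $\N$ (using all three clauses of Definition~\ref{defnormalcrossed} — clause (2) for the level-$1$ action, clause (1) for the level-$2$ conjugation, and clause (3) to check the Peiffer identity and compatibility are preserved) before naturality of $\ell$ can be invoked. Once this restriction is established, properties $(i)$ and $(ii)$ are immediate consequences of the identity $\ell^\N \circ c_{t_2} = \L(c_{t_2}) \circ \ell^\N$ with $\L(c_{t_2})$ invertible. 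Note that this proposition deliberately stops short of the third clause $[\kappa_2(K_2), T_1] \subseteq \kappa_1(K_1)$ of normality of $\kappa(\ker(\ell^\N))$ in $\T$ — that is the one clause that can fail, and isolating it is the whole point, since it is what Theorem~\ref{nonexistencelocfibr} will exploit to produce a counter-example.
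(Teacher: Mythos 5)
Your proof is correct and follows essentially the same route as the paper: both arguments use the normality of $\N$ in $\T$ to produce the conjugation automorphism $c_{t_2}=(\theta_{t_2},\sigma_{t_2})$ of the crossed module $\N$, and then invoke naturality of the coaugmentation $\ell$ (equivalently, the universal property of the kernel) to see that $c_{t_2}$ restricts to $\ker(\ell^\N)$, which is exactly $(i)$ and $(ii)$. Your closing remark correctly identifies that only the third normality clause $[\kappa_2(\ker(\ell_2^\N)),T_1]\subseteq\kappa_1(\ker(\ell_1^\N))$ is left open, in agreement with Corollary~\ref{Corollaryconditionnormalite}.
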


\begin{proof}

Using \cref{rem:mononormal} we identify $\N$ with the normal subcrossed module $\kappa(\N)$ and thus omit the use of $\kappa$ in this proof.
It is straightforward to see that  $\ker(\ell^\N)$ is a subcrossed module of $\T$ via \cref{defsubxmod}.
To show properties $(i)$ and $(ii)$ of the lemma we will use the following construction. Since $\N$ is a normal subcrossed module of $\T$ 
we have an induced action of $\T$ on $\N$ as explained in Definition 1.3.5 in \cite{LG}. In terms of crossed modules, it implies that we have 
a ``conjugation'' morphism of crossed modules $c_{t_2} := (\theta_{t_2},\sigma_{t_2}) \in Aut(N_1,N_2,\partial^{\N})$ depending on an element 
$t_2 \in T_2$ defined by

\begin{align*}
    \theta_{t_2}: \ &N_1 \rightarrow N_1 \hspace{2cm} \sigma_{t_2}: N_2 \rightarrow N_2 \\
     & n_1 \mapsto \ ^{t_2}n_1 \hspace{2.3cm} n_2 \mapsto t_2n_2t_2^{-1}
\end{align*}
If we consider the morphism $ c_{t_2} \colon \N \to \N$ in $\sf XMod$ we can construct the following diagram
 \[ \begin{tikzpicture}[descr/.style={fill=white},baseline=(A.base),scale=0.8]
 \node (X) at (-2.5,0) {$\ker(\ell^\N)$};
\node (Y) at (-2.5,2) {$\ker(\ell^\N)$};
\node (A) at (0,0) {$\N$};
\node (B) at (0,2) {$\N$};
\node (C) at (2.5,2) {$\LN$};
\node (D) at (2.5,0) {$\LN$};
  \path[-stealth,font=\scriptsize,dashed]
  (Y) edge node[left] {$c_{t_2}|_{\ker(\ell^\N)} $} (X); 
  \path[>-stealth,font=\scriptsize]
  (Y) edge node[below] {$ $} (B)
  (X) edge node[below] {$ $} (A); 
  \path[-stealth,font=\scriptsize]
  (A.east) edge node[below] {$\ell^{\N}$} (D.west) 
 (C.south) edge node[right] {${\sf L} c_{t_2}$} (D.north) 
 (B.south) edge node[descr] {$c_{t_2}$} (A.north) 
 (B.east)  edge node[above] {$\ell^{\N}$} (C.west);
\end{tikzpicture}
\] 
By definition of the kernel and its universal property, $c_{t_2}$ restricts to the kernel, which implies that  
properties $(i)$ and $(ii)$ hold. 
\end{proof}

This implies that the normality condition of $\kappa(\ker(\ell^\N))$ in the crossed module $\T$ can be expressed as follows.
\begin{corollary}
\label{Corollaryconditionnormalite}
 Let $\kappa\colon N := (N_1,N_2, \partial^{\N}) \rightarrow \T :=(T_1,T_2,\partial^{\T})$ be a normal monomorphism of crossed modules, then $\kappa(\ker(\ell^\N))$ is a normal subcrossed module of $\T$ if and only if we have the following inclusion 
\begin{equation}\label{Conditionnormalite}
[\kappa_2(ker(\ell_2^\N)),T_1] \subseteq \kappa_1(ker(\ell_1^\N))
\end{equation}
\end{corollary}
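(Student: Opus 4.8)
The plan is to obtain the corollary by simply unwinding \cref{defnormalcrossed} in the light of the preceding proposition. Using \cref{rem:mononormal} I would first identify $\N$ with the normal subcrossed module $\kappa(\N)$ of $\T$, so that $\kappa(\ker(\ell^{\N}))$ is nothing but the crossed module $\ker(\ell^{\N})$, whose underlying subgroups are $\ker(\ell_1^{\N}) \leq T_1$ and $\ker(\ell_2^{\N}) \leq T_2$ (kernels in $\sf XMod$ being computed component-wise). The preceding proposition already guarantees that $\ker(\ell^{\N})$ is a subcrossed module of $\T$, so the only thing left to settle is \emph{when} it is normal, i.e.\ when all three conditions of \cref{defnormalcrossed} hold.

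Next I would observe that conditions $(1)$ and $(2)$ of \cref{defnormalcrossed}---that $\ker(\ell_2^{\N})$ be normal in $T_2$, and that $\ker(\ell_1^{\N})$ be stable under the action of $T_2$---are exactly the assertions $(i)$ and $(ii)$ established in the preceding proposition, hence hold with no extra hypothesis. Consequently $\kappa(\ker(\ell^{\N}))$ is a normal subcrossed module of $\T$ if and only if the remaining condition $(3)$ of \cref{defnormalcrossed} holds; written out with the bracket notation $[-,-]$ introduced just after \cref{defsubxmod}, and with $\kappa$ reinstated, this is precisely the inclusion $[\kappa_2(\ker(\ell_2^{\N})),T_1] \subseteq \kappa_1(\ker(\ell_1^{\N}))$ of \eqref{Conditionnormalite}. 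Both implications of the corollary follow at once.

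I do not expect any real obstacle: all the substance sits in the preceding proposition, and the corollary is the bookkeeping remark that, once $(i)$ and $(ii)$ are in hand, a single commutator inclusion is all that distinguishes a subcrossed module from a normal one. The only mild point requiring care is checking that the identification $\N \cong \kappa(\N)$ really is harmless here---i.e.\ that forming $\kappa(\ker(\ell^{\N}))$ commutes with taking kernels levelwise---but this is immediate from the component-wise description of kernels in $\sf XMod$ recalled earlier.
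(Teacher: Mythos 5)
Your proposal is correct and matches the paper's (implicit) argument exactly: the preceding proposition supplies conditions $(1)$ and $(2)$ of \cref{defnormalcrossed}, so normality of $\kappa(\ker(\ell^{\N}))$ reduces precisely to the remaining commutator condition \eqref{Conditionnormalite}. The paper treats this as an immediate consequence and gives no further proof, so your bookkeeping is exactly what is intended.
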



\section{Examples and counter-examples for fiberwise localizations}
In this section, we illustrate the construction of fiberwise localization in $\sf XMod$ by using the normality condition. It is first interesting to notice that for some particular localization functors, condition \eqref{Conditionnormalite} is always satisfied. For instance, it is the case if the functor $\L \colon \sf XMod \to XMod$ preserves monomorphisms. Surprisingly, there exist examples for which the normality condition does not hold. In contrast to the case of groups or topological spaces, in $\sf XMod$ fiberwise localization does not always exist.

\begin{lemma}\label{preservemono}
Let $\L \colon \sf XMod \to XMod$ be a regular-epi localization functor that preserves monomorphisms and \[\begin{tikzpicture}[descr/.style={fill=white},baseline=(A.base),scale=0.8] 
\node (A) at (0,0) {$\T$};
\node (B) at (2.5,0) {$\Q$};
\node (C) at (-2.5,0) {$\N$};
\node (O1) at (-4.5,0) {$1$};
\node (O2) at (4.5,0) {$1$};
\path[-stealth,font=\scriptsize]
(C.east) edge node[above] {$\kappa$} (A.west)
;
\path[-stealth,font=\scriptsize]
(O1) edge node[above] {$ $} (C)
(B) edge node[above] {$ $} (O2)
(A) edge node[above] {$\alpha$} (B);
\end{tikzpicture} \] be an exact sequence of crossed modules. 
Then $\kappa(\ker(\ell^{\N}))$ is a normal subcrossed module of $\T$. 
\end{lemma}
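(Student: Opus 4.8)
The plan is to reduce, via \cref{Corollaryconditionnormalite}, to verifying the single inclusion \eqref{Conditionnormalite}, namely $[\kappa_2(\ker(\ell_2^{\N})),T_1]\subseteq \kappa_1(\ker(\ell_1^{\N}))$, and to extract this inclusion from the hypothesis that $\L$ preserves monomorphisms. Recall that the preceding proposition already tells us $\kappa(\ker(\ell^{\N}))$ is a subcrossed module of $\T$ satisfying conditions $(i)$ and $(ii)$, so by \cref{Corollaryconditionnormalite} this inclusion is all that remains.

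First I would unwind what mono-preservation gives. Since the sequence is exact, $\kappa$ is a normal monomorphism, and applying $\L$ produces a monomorphism $\L\kappa\colon \LN\to\LT$ in $\sf XMod$. Recalling that a morphism of crossed modules is a monomorphism exactly when both components are injective group homomorphisms (monomorphisms in the pointed protomodular category $\sf XMod$ are the morphisms with trivial kernel, and kernels are computed levelwise), we get that $(\L\kappa)_1$ and $(\L\kappa)_2$ are injective. Combining this with the naturality square $\ell^{\T}\circ\kappa=\L\kappa\circ\ell^{\N}$ yields, at each level $i\in\{1,2\}$, the identity
\[
\kappa_i(\ker(\ell_i^{\N}))=\kappa_i(N_i)\cap\ker(\ell_i^{\T}),
\]
where $\subseteq$ is immediate from naturality, and $\supseteq$ follows by writing an element of the right-hand side as $\kappa_i(n_i)$ with $n_i\in N_i$ and using injectivity of $(\L\kappa)_i$ to force $\ell_i^{\N}(n_i)=1$. (In fact only $\kappa_2(\ker(\ell_2^{\N}))\subseteq\ker(\ell_2^{\T})$ and $\kappa_1(N_1)\cap\ker(\ell_1^{\T})\subseteq\kappa_1(\ker(\ell_1^{\N}))$ are used below.)

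Then I would feed in two normality facts. On the one hand $\kappa(\N)$ is a normal subcrossed module of $\T$, being the kernel of $\alpha$, so $[\kappa_2(N_2),T_1]\subseteq\kappa_1(N_1)$ by condition (3) of \cref{defnormalcrossed}. On the other hand $\ker(\ell^{\T})$ is a kernel, hence normal in $\T$, so $[\ker(\ell_2^{\T}),T_1]\subseteq\ker(\ell_1^{\T})$. Since $\kappa_2(\ker(\ell_2^{\N}))$ is contained in both $\kappa_2(N_2)$ and $\ker(\ell_2^{\T})$, monotonicity of the commutator $[-,T_1]$ gives
\[
[\kappa_2(\ker(\ell_2^{\N})),T_1]\subseteq \kappa_1(N_1)\cap\ker(\ell_1^{\T})=\kappa_1(\ker(\ell_1^{\N})),
\]
which is exactly \eqref{Conditionnormalite}; \cref{Corollaryconditionnormalite} then concludes.

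The only delicate step is the levelwise identification $\kappa_i(\ker(\ell_i^{\N}))=\kappa_i(N_i)\cap\ker(\ell_i^{\T})$, and this is precisely where "$\L$ preserves monomorphisms" enters through the injectivity of $(\L\kappa)_i$; it is also exactly this step that fails for the functors studied in the next section, producing the counter-examples to fiberwise localization. Everything else is routine bookkeeping with the three defining conditions of a normal subcrossed module together with the fact that kernels and intersections in $\sf XMod$ are computed levelwise (so that, alternatively, one may phrase the argument as: $\kappa(\ker(\ell^{\N}))=\kappa(\N)\cap\ker(\ell^{\T})$ is an intersection of two normal subcrossed modules of $\T$, hence normal). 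The standing regular-epi assumption is not actually needed for this particular lemma.
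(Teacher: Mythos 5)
Your proof is correct and is essentially the paper's argument: the key step in both is that mono-preservation (injectivity of $\L\kappa$, via naturality) identifies $\kappa(\ker(\ell^{\N}))$ with the intersection $\kappa(\N)\cap\ker(\ell^{\T})$ of two normal subcrossed modules of $\T$ — the paper gets this by applying \cref{usefulpropopointed} to see the kernel square as a pullback, while you verify it levelwise and then check condition \eqref{Conditionnormalite} through \cref{Corollaryconditionnormalite}, which is just a more explicit rendering of the same idea (and you even state the paper's intersection phrasing as an alternative). Your side remark that the regular-epi hypothesis is not needed for this particular lemma is also accurate.
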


\begin{proof}
%
%
%
Consider the following diagram where ${\sf L} \kappa$ is a monomorphism.
\[ \begin{tikzpicture}[descr/.style={fill=white},baseline=(A.base),yscale=0.8]
\node (O1) at (-2,2) {$ 1$};
\node (O2) at (-2,0) {$ 1$};
\node (A) at (0,0) {$\ker(\ell^\T)$};
\node (X) at (1,1) {$(1)$};
\node (B) at (0,2) {$\ker(\ell^\N)$};
\node (C) at (2,2) {$\N$};
\node (D) at (2,0) {$\T$};
\node (E) at (4,2) {$\LN$};
\node (F) at (4,0) {$\LT$};
  \path[>-stealth,font=\scriptsize]
  (A.east) edge node[below] {$ $} (D.west) 
 (C.south) edge node[right] {$ \kappa $} (D.north) 
 (B.south) edge node[left] {$ $} (A.north) 
 (B.east)  edge node[above] {$ $} (C.west)
 (E.south) edge node[right] {$ \L \kappa$} (F.north);
   \path[-stealth,font=\scriptsize]
   (O1) edge node[below] {$ $} (B)
   (O2) edge node[below] {$ $} (A)
    (C.east)  edge node[above] {$ \ell^\N$} (E.west)
 (D.east) edge node[below] {$\ell^\T$} (F.west);
\end{tikzpicture}
\]
Since $\L \kappa$ is a monomorphism then (1) is a pullback by \cref{usefulpropopointed}. It implies that $\kappa(\ker(\ell^\N))$ is a normal subcrossed 
module of $\T$ as it can be seen as the intersection of the normal subcrossed modules $\N$ and $\ker(\ell^\T)$ of $\T$.
\end{proof}

We now give some examples of localization functors and exact sequences for which we can apply the fiberwise localization construction.

\begin{example}
 Let us consider the nullification functor with respect to the crossed module $\mathbb{Z} \to 0$ as defined in \cref{example_PZ2}.
 \[ 
{ \sf P_{ \mathbb{Z} \to 0}} \left( 
\begin{tikzpicture}[descr/.style={fill=white},yscale=0.7,baseline=(O.base)] 
\node (O) at (-5,-1) {$ $};
\node (F) at (-4.5,0) {$N_1$};
\node (F') at (-4.5,-2) {$N_2$};
\path[-stealth,font=\scriptsize]
(F.south) edge node[left] {$ \partial^\N $} (F'.north);
\end{tikzpicture} 
 \right) = 
\begin{tikzpicture}[descr/.style={fill=white},yscale=0.7,baseline=(O.base)] 
\node (O) at (-1,-1) {$ $};
\node (C) at (-2.5,0) {$\partial^\N(N_1)$};
\node (C') at (-2.5,-2) {$N_2$};
\path[-stealth,font=\scriptsize]
(C.south) edge node[descr] {$ inc $} (C'.north);
\end{tikzpicture} 
  \]
Monomorphisms can be described ``component-wise'' as two monomorphisms in the category of groups. We can see that if $\N $ is a 
subcrossed module of $\M$, then $ \partial^{\N}(N_1)$ is included in $ \partial^{\M}(M_1)$ since $\partial^{\N}$ is the restriction of $\partial^{\M}$. 
The other conditions are trivial so we can conclude that ${ \sf P_{ \mathbb{Z} \to 0}}$ preserves monomorphisms. This observation was also made in Example 2.5 in \cite{GE}, where they noticed that it implies that the condition that they called $(N)$ holds. In our case, condition $(N)$ is the necessary and sufficient condition to obtain fiberwise localization (see \cref{thmfiberwiselocalization}). It implies that for any exact sequence of crossed modules there exists a fiberwise localization for the functor ${ \sf P_{ \mathbb{Z} \to 0}}$.
  \end{example}
 The functor ${ \sf C} \colon \sf XMod \to XMod$ introduced in \cref{C} is also an example of a localization functor that satisfies always the condition of \cref{thmfiberwiselocalization}. 
 \begin{example}
Let \[ \begin{tikzpicture}[descr/.style={fill=white},baseline=(A.base),scale=0.8] 
\node (A) at (0,0) {$T_1$};
\node (B) at (2.5,0) {$Q_1$};
\node (C) at (-2.5,0) {$N_1$};
\node (A') at (0,-2) {$T_2$};
\node (B') at (2.5,-2) {$Q_2$};
\node (C') at (-2.5,-2) {$N_2$};
\node (O1) at (-4.5,0) {$1$};
\node (O1') at (-4.5,-2) {$1$};
\node (O2) at (4.5,0) {$1$};
\node (O2') at (4.5,-2) {$1$};
\path[-stealth,font=\scriptsize]
(O1) edge node[above] {$ $} (C)
(B) edge node[above] {$ $} (O2)
(B') edge node[above] {$ $} (O2')
(O1') edge node[above] {$ $} (C')
(B.south) edge node[right] {$ \partial^{\Q}$}  (B'.north)
 (C.south) edge node[left] {$ \partial^{\N} $}  (C'.north)
(A.south) edge node[left] {$ \partial^{\T}$} (A'.north)
(C'.east) edge node[above] {$\kappa_2$} (A'.west)
(A') edge node[above] {$\alpha_2$} (B') ([yshift=2pt]A'.east)
(C.east) edge node[above] {$\kappa_1$} (A.west)
(A) edge node[above] {$\alpha_1$} (B);
\end{tikzpicture} 
\] 
be an exact sequence of crossed modules. We consider the functor ${ \sf C} \colon \sf XMod \to XMod$ and prove that $\kappa(\ker(\ell^\N))$ is always normal in $\T$. Indeed, we just need to verify \eqref{Conditionnormalite} for  $\kappa(\ker(\ell^\N)) = (N_1,\partial^\N(N_1)),\tilde{\partial}^\N)$. 
 We have the following inclusions 
\[ 
[\partial^\N(N_1), T_1] \subseteq [N_2,T_1] \subseteq N_1
\]
since $\N$ normal in $\T$ and we conclude that for the localization functor ${ \sf C}$ there always exists a fiberwise localization. It is interesting to notice that even if this functor admits a fiberwise localization it does not necessarily preserve monomorphisms. We illustrate this statement via the following example where $S_4$ is the symmetric group of order 4 and $A_4$ the alternating group.
\[ 
{\sf C} \left( 
\begin{tikzpicture}[descr/.style={fill=white},yscale=0.7,baseline=(O.base)]
\node (O) at (-5,-1) {$ $};
\node (F) at (-4.5,0) {$A_4$};
\node (F') at (-4.5,-2) {$S_4$};
\node (E) at (-2.5,0) {$S_4$};
\node (E') at (-2.5,-2) {$S_4$};
\path[-stealth,font=\scriptsize]
(F) edge node[left] {$   $} (E)
(F) edge node[left] {$   $} (F');
\draw[commutative diagrams/.cd, ,font=\scriptsize]
(F') edge[commutative diagrams/equal] (E')
(E) edge[commutative diagrams/equal] (E');
\end{tikzpicture} 
 \right) = 
\begin{tikzpicture}[descr/.style={fill=white},yscale=0.7,baseline=(O.base)] 
\node (O) at (-5,-1) {$ $};
\node (F) at (-4.5,0) {$1$};
\node (F') at (-4.5,-2) {$\mathbb{Z}/2\mathbb{Z}$};
\node (E) at (-2.5,0) {$1$};
\node (E') at (-2.5,-2) {$1$};
\path[-stealth,font=\scriptsize]
(F) edge node[left] {$   $} (E)
(F') edge node[left] {$   $} (E')
(F) edge (F')
(E) edge (E');
\end{tikzpicture} 
\]

 \end{example}

  \begin{example}
   We consider the following two crossed modules $\R A_4$ and $\R S_4$ where $\R$ is the functor defined in \cref{lemma_lefttruncation} . We can verify that $\R A_4$ is a normal subcrossed module of $\R S_4$. So we have the following exact sequence.

\begin{equation}\label{exactA4}
\begin{tikzpicture}[descr/.style={fill=white},baseline=(A.base),scale=0.8] 
\node (A) at (0,0) {$S_4$};
\node (B) at (2.5,0) {$\mathbb{Z}/2\mathbb{Z}$};
\node (C) at (-2.5,0) {$A_4$};
\node (A') at (0,-2) {$S_4$};
\node (B') at (2.5,-2) {$\mathbb{Z}/2\mathbb{Z}$};
\node (C') at (-2.5,-2) {$A_4$};
\node (O1) at (-4.5,0) {$1$};
\node (O1') at (-4.5,-2) {$1$};
\node (O2) at (4.5,0) {$1$};
\node (O2') at (4.5,-2) {$1$};
\draw[commutative diagrams/.cd, ,font=\scriptsize]
(A) edge[commutative diagrams/equal] (A')
(B) edge[commutative diagrams/equal] (B')
(C) edge[commutative diagrams/equal] (C');
\path[-stealth,font=\scriptsize]
(O1) edge node[above] {$ $} (C)
(B) edge node[above] {$ $} (O2)
(B') edge node[above] {$ $} (O2')
(O1') edge node[above] {$ $} (C')
(C'.east) edge node[above] {$\kappa$} (A'.west)
(A') edge node[above] {$\pi$} (B') ([yshift=2pt]A'.east)
(C.east) edge node[above] {$\kappa$} (A.west)
(A) edge node[above] {$\pi$} (B);
\end{tikzpicture} 
 \end{equation}
 
 By considering the abelianization functor, as defined in \cref{Ab}, we wonder if it is possible to construct for this exact sequence a fiberwise localization. 
 First, we give an explicit description of the functor $\sf Ab$ applied to $\R A_4$. 
   \[ 
{\sf Ab} \left( 
\begin{tikzpicture}[descr/.style={fill=white},yscale=0.7,baseline=(O.base)] 
\node (O) at (-5,-1) {$ $};
\node (F) at (-4.5,0) {$A_4$};
\node (F') at (-4.5,-2) {$A_4$};
\draw[commutative diagrams/.cd, ,font=\scriptsize]
(F) edge[commutative diagrams/equal] (F');
\end{tikzpicture} 
 \right) = 
\begin{tikzpicture}[descr/.style={fill=white},yscale=0.7,baseline=(O.base)] 
\node (O) at (-2.5,-1) {$ $};
\node (C) at (-2.5,0) {${A_4}/{[A_4,A_4]} $};
\node (C') at (-2.5,-2) {${A_4}/{[A_4,A_4]}$};
\draw[commutative diagrams/.cd, ,font=\scriptsize]
(C) edge[commutative diagrams/equal] (C');
\end{tikzpicture} 
= 
\begin{tikzpicture}[descr/.style={fill=white},yscale=0.7,baseline=(O.base)] 
\node (O) at (-2.5,-1) {$ $};
\node (C) at (-2.5,0) {${A_4}/{V_4} $};
\node (C') at (-2.5,-2) {${A_4}/{V_4}$};
\draw[commutative diagrams/.cd, ,font=\scriptsize]
(C) edge[commutative diagrams/equal] (C');
\end{tikzpicture} 
\cong
\begin{tikzpicture}[descr/.style={fill=white},yscale=0.7,baseline=(O.base)] 
\node (O) at (-2.5,-1) {$ $};
\node (C) at (-2.5,0) {$\mathbb{Z}/3\mathbb{Z} $};
\node (C') at (-2.5,-2) {$\mathbb{Z}/3\mathbb{Z}$};
\draw[commutative diagrams/.cd, ,font=\scriptsize]
(C) edge[commutative diagrams/equal] (C');
\end{tikzpicture} 
  \]
  where $V_4$ denotes the Klein four-group. To be able to apply the construction of the fiberwise localization, we need that $\kappa(\ker(\ell^{\R A_4}))$ is a normal subcrossed module of $\R S_4$. As proved in \cref{Corollaryconditionnormalite}, we only need to verify condition \eqref{Conditionnormalite}, i.e.\ $[V_4,S_4]$ is included in $V_4$. Since $V_4$ is a normal subgroup of $S_4$, the equality $[V_4,S_4] \subseteq V_4 $ holds and $\R V_4$ is a normal subcrossed module of $\R S_4$.
   Hence, we have the following construction of the fiberwise localization
 \begin{center}
\begin{tikzpicture}[descr/.style={fill=white},baseline=(A.base),
scale=0.8] 
\node (C'') at (-2.5,2) {$\R V_4$};
\node (A) at (0,0) {$\R S_4$};
\node (B) at (2.5,0) {$\R (\mathbb{Z}/2\mathbb{Z})$};
\node (C) at (-2.5,0) {$\R A_4$};
\node (A') at (0,-2) {$\R S_3$};
\node (B') at (2.5,-2) {$\R (\mathbb{Z}/2\mathbb{Z})$};
\node (C') at (-2.5,-2) {$\R (\mathbb{Z}/3\mathbb{Z})$};
\node (O1) at (-4.5,0) {$1$};
\node (O1') at (-4.5,-2) {$1$};
\node (O2) at (4.5,0) {$1$};
\node (O2') at (4.5,-2) {$1$};
\path[-stealth,font=\scriptsize]

 (C'') edge node[left] {$  $}  (C.north)
(C.east) edge node[above] {$ $} (A.west)
 (C'') edge node[left] {$  $}  (A)
;
\draw[commutative diagrams/.cd, ,font=\scriptsize]
(B) edge[commutative diagrams/equal] (B');
\path[-stealth,font=\scriptsize]
(C'.east) edge node[above] {$ $} (A'.west)
(O1) edge node[above] {$ $} (C)
(B) edge node[above] {$ $} (O2)
(B') edge node[above] {$ $} (O2')
(O1') edge node[above] {$ $} (C')
 (C.south) edge node[left] {$ l^{\R A_4} $}  (C'.north)
(A') edge node[above] {$ $} (B') ([yshift=2pt]A'.east)
(A) edge node[above] {$\alpha$} (B);
\path[-stealth,font=\scriptsize]
(A.south) edge node[left] {$ f $} (A'.north);
\end{tikzpicture} 
 \end{center}

  \end{example} 
  
Now, we would like to emphasize the fact that condition \eqref{Conditionnormalite} is not trivially satisfied. The counter-example we propose
is very similar to the previous example where fiberwise abelianization was shown to exist.

\begin{theorem}\label{nonexistencelocfibr}
There exist regular-epi localization functors $\L$ and exact sequences in $\sf Xmod$ for which the fiberwise localization does not exist.
\end{theorem}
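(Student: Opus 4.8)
The plan is to produce one explicit counter-example: take $\L = \P_{\sf X\mathbb Z}$, the nullification functor of \cref{example_PZ}, together with the short exact sequence of crossed modules
\[
1 \to \R A_4 \to \R S_4 \to \R(\mathbb Z/2\mathbb Z) \to 1
\]
already considered above (with $\R$ the right adjoint of \cref{lemma_lefttruncation}), and check that it violates the normality criterion of \cref{thmfiberwiselocalization}. First I would recall that $\P_{\sf X\mathbb Z}$ is a regular-epi localization by \cref{propnullify}, so that \cref{thmfiberwiselocalization} and \cref{Corollaryconditionnormalite} apply to it.

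Next I would compute $\ker(\ell^{\R A_4})$ for this functor. Since $\P_{\sf X\mathbb Z}$ sends $(N_1,N_2,\partial)$ to $(N_1/[N_2,N_1],1,0)$, the coaugmentation $\ell^{\R A_4}\colon (A_4,A_4,\mathrm{id}) \to (A_4/[A_4,A_4],1,0)$ is the abelianization quotient at level one and the zero map at level two. Hence $\ker(\ell_2^{\R A_4}) = A_4$ and $\ker(\ell_1^{\R A_4}) = [A_4,A_4] = V_4$, the Klein four-group. By \cref{Corollaryconditionnormalite}, $\kappa(\ker(\ell^{\R A_4}))$ is a normal subcrossed module of $\R S_4$ if and only if condition \eqref{Conditionnormalite} holds, which in this case reads $[A_4,S_4] \subseteq V_4$.

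The only computation to carry out is that $[A_4,S_4] = A_4$, which is strictly larger than $V_4$. Indeed $[A_4,S_4]$ is normal in $S_4$ (as $A_4$ is) and contains $[A_4,A_4] = V_4$, the kernel of the quotient $S_4 \to S_4/V_4 \cong S_3$; therefore $[A_4,S_4]$ is the preimage of $[A_3,S_3]$, and since conjugating a $3$-cycle by a transposition inverts it we get $[A_3,S_3] = A_3$, whence $[A_4,S_4] = A_4 \supsetneq V_4$. Thus \eqref{Conditionnormalite} fails, and \cref{thmfiberwiselocalization} shows that the displayed sequence admits no fiberwise $\P_{\sf X\mathbb Z}$-localization, which proves the theorem.

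I do not expect a genuine obstacle here: the only arithmetic is the elementary identity $[A_4,S_4]=A_4$, and the real content is the choice of $\L$. Taking $\L = \P_{\sf X\mathbb Z}$ forces $\ker(\ell^\N)$ to have full level-two part $N_2$, so the normality condition \eqref{Conditionnormalite} degenerates into the comparison $[N_2,T_1]\subseteq[N_2,N_1]$ of a commutator subgroup of $T_1$ with a strictly smaller commutator subgroup of $N_1$; any pair $N_2\triangleleft T_2$, $N_1\triangleleft T_1$ exhibiting more ambient conjugation than internal conjugation will do. The contrast with the fiberwise abelianization example just above, where $[V_4,S_4]\subseteq V_4$ did hold, is exactly what makes this example instructive.
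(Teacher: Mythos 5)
Your proposal is correct and follows essentially the same route as the paper: the same functor $\P_{\sf X\mathbb Z}$ and the same sequence $1 \to \R A_4 \to \R S_4 \to \R(\mathbb Z/2\mathbb Z) \to 1$, with the failure of fiberwise localization detected via \cref{thmfiberwiselocalization} and condition \eqref{Conditionnormalite}, reduced to the identity $[A_4,S_4]=A_4 \not\subseteq V_4$. The only difference is that you spell out the verification of $[A_4,S_4]=A_4$ (via the quotient $S_4/V_4\cong S_3$), which the paper simply asserts.
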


\begin{proof}
Let us consider the following exact sequence
\[ \begin{tikzpicture}[descr/.style={fill=white},baseline=(A.base),scale=0.8] 
\node (A) at (0,0) {$\R S_4$};
\node (B) at (2.5,0) {$\R (\mathbb{Z}/2\mathbb{Z})$};
\node (C) at (-2.5,0) {$\R A_4$};
\node (O1) at (-4.5,0) {$1$};
\node (O2) at (4.5,0) {$1$};
\path[-stealth,font=\scriptsize]
(O1) edge node[above] {$ $} (C)
(B) edge node[above] {$ $} (O2)
(C.east) edge node[above] {$ $} (A.west)
(A) edge node[above] {$ $} (B);
\end{tikzpicture} \] and apply the functor $\P_{\sf X \mathbb{Z}}$ defined in \cref{example_PZ}, to $\R A_4$.
 
 \[ 
\P_{\X \mathbb{Z}} \left( 
\begin{tikzpicture}[descr/.style={fill=white},yscale=0.7,baseline=(O.base)] 
\node (O) at (-5,-1) {$ $};
\node (F) at (-4.5,0) {$A_4$};
\node (F') at (-4.5,-2) {$A_4$};
\draw[commutative diagrams/.cd, ,font=\scriptsize]
(F) edge[commutative diagrams/equal] (F');
\end{tikzpicture} 
 \right) = 
\begin{tikzpicture}[descr/.style={fill=white},yscale=0.7,baseline=(O.base)] 
\node (O) at (-1,-1) {$ $};
\node (C) at (-2.5,0) {${A_4}/{[A_4, A_4]}$};
\node (C') at (-2.5,-2) {$1$};
\path[-stealth,font=\scriptsize]
(C.south) edge node[left] {$  $} (C'.north);
\end{tikzpicture} 
= 
\begin{tikzpicture}[descr/.style={fill=white},yscale=0.7,baseline=(O.base)] 
\node (O) at (-1,-1) {$ $};
\node (C) at (-2.5,0) {${A_4}/{V_4}$};
\node (C') at (-2.5,-2) {$1$};
\path[-stealth,font=\scriptsize]
(C.south) edge node[left] {$  $} (C'.north);
\end{tikzpicture} 
  \]
where $V_4 =[A_4, A_4]$ is the Klein four-group. The kernel of $\ell^{\R A_4}$ is given by the crossed module $V_4 \hookrightarrow A_4$. 
To be able to construct a fiberwise localization for the exact sequence \eqref{exactA4}, we need the image of this kernel to be a normal subcrossed 
module of $\R S_4$. This condition is not satisfied since \eqref{Conditionnormalite} does not hold. Indeed, the action of $S_4$ on $A_4$ is given by 
conjugation, hence, condition \eqref{Conditionnormalite} means that the commutator $[S_4,A_4]$ has to be included in $V_4$. 
But we have the following equality $[S_4,A_4] = A_4$, which implies that $V_4 \hookrightarrow A_4$ is not a normal subcrossed module of $\R S_4$. 
Therefore, fiberwise localization does not exist for this nullification functor and this exact sequence.
\end{proof}

We have understood at this point that localization functors of crossed modules do not behave like localization functors of groups. One other major
difference is illustrated by the behavior of the kernel of a nullification functor. In the category of groups (the homotopical analog is also true for 
topological spaces), the kernels of the nullification morphisms are acyclic. In fact, $L(ker(\ell^G))$ is trivial for any group $G$
if and only if $L$ is a nullification functor, \cite[Lemma~5.1]{FS}. In the context of crossed modules, such a characterization of nullification
functors fails.

\begin{prop}\label{nullificationnonacyclic}
There are nullification functors $\sf P_A$ in $\sf XMod$ such that the kernels of their localization morphisms are not $\sf A$-acyclic in general. 
\end{prop}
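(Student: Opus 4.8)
The plan is to produce a single explicit counterexample, using once more the nullification functor $\P_{\X\mathbb{Z}}$ of \cref{example_PZ} that already powered \cref{nonexistencelocfibr}; since the statement only requires that $\sf A$-acyclicity of the kernel \emph{fail in general}, one well-chosen pair $(\sf A,\N)$ suffices. The scheme has three steps: first fix a crossed module $\N$ of convenient shape; then compute the kernel $\ker(\ell^\N)$ explicitly as a crossed module; and finally apply $\P_{\X\mathbb{Z}}$ a second time to $\ker(\ell^\N)$ and verify that the outcome is nontrivial.

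For the first step I would take $\N=\R G=(G,G,\mathrm{id}_G)$ for a finite group $G$ acting on itself by conjugation, as in \cref{lemma_lefttruncation}. With this choice the formula of \cref{example_PZ} reads $\P_{\X\mathbb{Z}}(\R G)=(G/[G,G]\to 1)$, the coaugmentation $\ell^{\R G}$ being the abelianization $G\to G/[G,G]$ at level one and $G\to 1$ at level two. For the second step, since kernels in $\sf XMod$ are computed componentwise, one gets $\ker(\ell^{\R G})=\big([G,G]\hookrightarrow G\big)$, the inclusion of the commutator subgroup with $G$ acting by conjugation (a crossed module of the type appearing in \cref{examplexmod}). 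For the third step, applying \cref{example_PZ} to this crossed module and observing that the subgroup written there $[N_2,N_1]$ is now exactly the ordinary commutator $[G,[G,G]]$, I obtain
\[
\P_{\X\mathbb{Z}}\big(\ker(\ell^{\R G})\big)=\big([G,G]\,/\,[G,[G,G]]\;\to\;1\big).
\]

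It then remains to pick $G$ for which $[G,[G,G]]\subsetneq[G,G]$, equivalently $\gamma_3(G)\neq\gamma_2(G)$. Any non-abelian nilpotent group works, and the most economical choice is the dihedral group $D_4$ of order $8$ (or a Heisenberg group over $\mathbb{F}_p$): there $[D_4,D_4]=\langle r^2\rangle\cong\mathbb{Z}/2$ lies in the centre, so $[D_4,[D_4,D_4]]=1$ and $\P_{\X\mathbb{Z}}\big(\ker(\ell^{\R D_4})\big)=(\mathbb{Z}/2\to 1)$, which is $\X\mathbb{Z}$-local and nonzero, hence certainly not trivial. Thus $\ker(\ell^{\R D_4})$ is not $\X\mathbb{Z}$-acyclic, and $\P_{\X\mathbb{Z}}$ is the desired nullification functor.

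The one genuine subtlety — the reason this is not mechanical — will be the choice of $G$: reusing the groups of \cref{nonexistencelocfibr} does \emph{not} work, because for $G=S_4$ (and likewise $G=A_4$) one checks that $[G,[G,G]]=[G,G]$, so that $\ker(\ell^{\R G})$ turns out to be $\X\mathbb{Z}$-acyclic after all. What is really needed is that the lower central series of $G$ strictly decreases at the third term, which non-abelian nilpotency supplies; once $G$ is fixed, identifying the componentwise kernel and matching $[N_2,N_1]$ with $[G,[G,G]]$ is routine bookkeeping.
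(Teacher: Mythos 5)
Your proposal is correct and is essentially the paper's own proof: the paper also uses $\P_{\X\mathbb{Z}}$ applied to $\R G$ for the dihedral group of order eight (which it denotes $D_8$, your $D_4$), identifies the kernel of the coaugmentation as the inclusion of the commutator subgroup $C_2\hookrightarrow D_8$, and nullifies again to get the nontrivial crossed module $(C_2\to 1)$. Your extra observation that any $G$ with $\gamma_3(G)\subsetneq\gamma_2(G)$ works (and that $S_4$, $A_4$ do not) is a correct, slightly more general framing, but the witnessing example is the same.
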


\begin{proof}
Let us consider the nullification functor with respect to the crossed module $\sf X \mathbb{Z} $ as in \cref{example_PZ2}.
Let us apply this functor to the crossed module $( D_8,D_8,Id_{D_8})$,
where $D_8$ is the dihedral group of order eight. 
  \[ 
\P_{\sf X \mathbb{Z}} \left( 
\begin{tikzpicture}[descr/.style={fill=white},yscale=0.7,baseline=(O.base)] 
\node (O) at (-5,-1) {$ $};
\node (F) at (-4.5,0) {$D_8$};
\node (F') at (-4.5,-2) {$D_8$};
\draw[commutative diagrams/.cd, ,font=\scriptsize]
(F) edge[commutative diagrams/equal] (F');
\end{tikzpicture} 
 \right) = 
\begin{tikzpicture}[descr/.style={fill=white},yscale=0.7,baseline=(O.base)] 
\node (O) at (-1,-1) {$ $};
\node (C) at (-2.5,0) {$\frac{D_8}{[D_8, D_8]}$};
\node (C') at (-2.5,-2) {$1$};
\path[-stealth,font=\scriptsize]
(C.south) edge node[left] {$  $} (C'.north);
\end{tikzpicture} 
 = 
\begin{tikzpicture}[descr/.style={fill=white},yscale=0.7,baseline=(O.base)] 
\node (O) at (-1,-1) {$ $};
\node (C) at (-2.5,0) {$\frac{D_8}{C_2}$};
\node (C') at (-2.5,-2) {$1$};
\path[-stealth,font=\scriptsize]
(C.south) edge node[left] {$  $} (C'.north);
\end{tikzpicture} 
  \]
  where $[D_8, D_8] = C_2$, which is actually the center of $D_8$. 
  Hence, the kernel of this construction is given by the crossed module $(C_2,D_8,inc)$. By applying the functor $\P_{\sf X \mathbb{Z}}$ on this crossed module we obtain the following crossed module
    \[ 
\P_{\sf X \mathbb{Z}} \left( 
\begin{tikzpicture}[descr/.style={fill=white},yscale=0.7,baseline=(O.base)] 
\node (O) at (-5,-1) {$ $};
\node (F) at (-4.5,0) {$C_2$};
\node (F') at (-4.5,-2) {$D_8$};
\path[-stealth,font=\scriptsize]
(F.south) edge node[left] {$ inc $} (F'.north);
\end{tikzpicture} 
 \right) = 
\begin{tikzpicture}[descr/.style={fill=white},yscale=0.7,baseline=(O.base)] 
\node (O) at (-1,-1) {$ $};
\node (C) at (-2.5,0) {$\frac{C_2}{[C_2, D_8]}$};
\node (C') at (-2.5,-2) {$1$};
\path[-stealth,font=\scriptsize]
(C.south) edge node[left] {$  $} (C'.north);
\end{tikzpicture} 
 = 
\begin{tikzpicture}[descr/.style={fill=white},yscale=0.7,baseline=(O.base)] 
\node (O) at (-1,-1) {$ $};
\node (C) at (-2.5,0) {$\frac{C_2}{1}$};
\node (C') at (-2.5,-2) {$1$};
\path[-stealth,font=\scriptsize]
(C.south) edge node[left] {$  $} (C'.north);
\end{tikzpicture} 
 = 
\begin{tikzpicture}[descr/.style={fill=white},yscale=0.7,baseline=(O.base)] 
\node (O) at (-1,-1) {$ $};
\node (C) at (-2.5,0) {$C_2$};
\node (C') at (-2.5,-2) {$1$};
\path[-stealth,font=\scriptsize]
(C.south) edge node[left] {$  $} (C'.north);
\end{tikzpicture} 
  \] which is not the trivial crossed module.
  \end{proof}

\begin{remark}
Thanks to a general result for any semi-abelian category of \cite{GranScherer}, we know that if a nullification functor $\P_\A$ admits a fiberwise localization, then the kernels of their localization morphisms are $\sf A$-acyclic.
\end{remark}

\section*{Conclusion}

In this article, we studied fiberwise localization for regular-epi localization functors of crossed modules of groups. To sum up, we found 
an adequate normality condition on a short exact sequence and a regular-epi localization functor that guarantees the existence of a fiberwise localization. 
We proved that this condition can be expressed easily in $\sf XMod$. This simple statement allowed us to obtain examples of fiberwise localizations and
unexpectedly also counter-examples: the nullification functor $\P_{\X \mathbb{Z}}$ does not admit a fiberwise localization in general. 
This observation highlighted an interesting difference between the fiberwise localizations of crossed modules and groups, 
 which can be tracked down to the notion of characteristic
subobject, namely one which is not only normal, but invariant under all automorphisms.
If we have groups $K \leq N \leq T$ such that $K$ is a characteristic subgroup of $N$ and $N$ is a normal subgroup of $T$, then $K$ is a normal subgroup of $T$. However the corresponding statement does not hold for crossed modules, as we have seen in \cref{nonexistencelocfibr} (note that $\ker(\ell^\N)$ is a characteristic subcrossed module of $N$).

We observed 
another important difference between nullification functors of crossed modules and groups. In the category of groups, nullification functors 
$\sf P_A$ are characterized by the fact that the kernels of their localization morphisms are $\sf A$-acyclic. In $\sf XMod$, $ \P_{\X \mathbb{Z}}$ 
does not satisfy this property. 
It is interesting to notice that even if the functor of nullification $\P_{\X \mathbb{Z}}$ fails to have a fiberwise localization or 
to have $\X \mathbb{Z}$-acyclic kernels, its local objects form a Birkhoff subcategory. 

Fiberwise localization is an important tool in the study of conditional flatness: 
{\blue in \cite{GranScherer} and \cite{FS}, the authors used functorial fiberwise localization 
to study conditionally flat localizations.  Since we are not always able to construct a fiberwise 
localization for regular-epi localization functors of crossed modules, it is interesting to} 
investigate conditional flatness for such functors, which we do in the forthcoming paper \cite{MSS2}. 
We study how we can simplify this property by following the strategy of \cite{FS,GranScherer}. 
We prove that a regular-epi localization functor is conditionally flat if and only if it is 
admissible (in the sense of Galois) for the class of regular epimorphisms. We also prove 
that nullification functors are conditionally flat even if they fail to be characterized by 
an acyclicity condition.

\printbibliography
\end{document}